 \newtheorem{thm}{Theorem}[section]
 \newtheorem{conj}{Conjecture}[section]
 \newtheorem{cor}[thm]{Corollary}
 \newtheorem{lem}[thm]{Lemma}
 \theoremstyle{definition}
 \theoremstyle{remark}
 \numberwithin{equation}{section}
 \newcommand{\Z}{\mathbb{Z}}
 \newcommand{\F}{\mathbb{F}}
 \newcommand{\Sc}{\operatorname{Sc}} 
 \newcommand{\tr}{\operatorname{tr}}
 \newcommand{\bs}\boldsymbol{}
 \newcommand{\CP}{\mathcal{P}}
\newcommand{\GL}{GL}
\begin{document}

\title[Multidimensional integrals and Painlev\'e V]
{Some multidimensional integrals in number theory and connections with the Painlev\'e V equation}

\author{Estelle Basor}
\address{American Institute of Mathematics}
\email{ebasor@aimath.org}

\author{Fan Ge}
\address{Pure Mathematics, University of Waterloo}
\email{fange.math@gmail.com}

\author{Michael O. Rubinstein}
\address{Pure Mathematics, University of Waterloo}
\email{mrubinst@uwaterloo.ca}

\thanks{The third author is Supported in part by an NSERC Discovery Grant}

\subjclass{Primary 11M06, Secondary 33E17}

\keywords{Painlev\'e, Hankel determinants, divisor sums, elliptic aliquot cycles}


\begin{abstract}
We study piecewise polynomial functions $\gamma_k(c)$ that appear in the asymptotics of averages of
the divisor sum in short intervals. Specifically, we express these polynomials as the inverse Fourier
transform of a Hankel determinant that satisfies a Painlev\'e V equation.
We prove that
$\gamma_k(c)$ is very smooth at its transition points, and also
determine the asymptotics of $\gamma_k(c)$ in a large neighbourhood of $k=c/2$.
Finally, we consider the coefficients that appear in the asymptotics of
elliptic Aliquot cycles.
\end{abstract}

\maketitle

\section{Introduction}

{\bf Asymptotics of the mean square of sums of the $k$-th divisor function over short intervals.}
Let $d_k(n)$ be the $k$-th divisor numbers, i.e. the Dirichlet coefficients of the $k$-th power
of the Riemann zeta function:
\begin{eqnarray}
    \label{eq:zeta k}
    \zeta(s)^k = \sum_1^\infty \frac{d_k(n)}{n^s}, \qquad \Re{s} > 1.
\end{eqnarray}
The Dirichlet coefficient $d_k(n)$ is equal to the number of ways of writing $n$ as
a product of $k$ factors.
Define
\begin{eqnarray}
    \label{eq:Delta}
    S_k(X) = \sum_{n \leq X} d_k(n).
\end{eqnarray}

Let $XP_{k-1}(\log{X})$ be the residue, at $s=1$ of $\zeta(s)^k X^s/s$, with
$P_{k-1}(\log{X})$ being a polynomial in $\log{X}$ of degree $k-1$. Then
\begin{eqnarray}
    \label{eq:main and remainder}
    S_k(X) =  X P_{k-1}(\log{X}) + \Delta_k(X),
\end{eqnarray}
with $\Delta_k(X)$ denoting the remainder term.

The $k$ divisor problem states that the true order
of magnitude for $\Delta_k$ is:
\begin{eqnarray}
    \label{eq:divisor problem}
    \Delta_k(X) = O\left(X^{(k-1)/2k+\epsilon}\right).
\end{eqnarray}

When $k=2$, the traditional Dirichlet divisor problem is
\begin{eqnarray}
    \label{eq:k=2}
    D_2(X) = X\log{X} +(2\gamma-1)X+\Delta_2(X),
\end{eqnarray}
with a conjectured remainder
\begin{eqnarray}
    \label{eq:dirichlet divisor problem}
    \Delta_2(X) = O\left(X^{1/4+\epsilon}\right).
\end{eqnarray}

The estimate for the remainder term $\Delta_k(X)$
is based on expected cancellation in Voronoi-type formulas for $\Delta_k(X)$
and also on estimates, due to Cram\'er~\cite{C} ($k=2$) and Tong~\cite{T} ($k>2$), for the mean square of $\Delta_k$.


Let
\begin{equation}\label{Def of Delta(x:H)}
\Delta_k(x;H) = \Delta_k(x+H)-\Delta_k(x)
\end{equation}
be the remainder term for sums of $d_k$ over the interval
$[x,x+H]$.


Define
\begin{equation}\label{Lester const}
    a_k = \prod_p \Big\{ (1- \frac 1p)^{k^2}\sum_{j=0}^\infty
    \left(\frac{\Gamma(k+j)}{\Gamma(k)j!}\right)^2 \frac 1{p^j} \Big\}.
\end{equation}

Keating, Rodgers, Roditty-Gershon, and Rudnick conjectured~\cite{KRRR}:
\begin{conj}\label{conj:mean square SI}
If $0<\alpha <1-\frac 1{k}$ is fixed, then for $H=X^\alpha$,
\begin{equation}
\frac 1X\int_X^{2X} \Big(\Delta_k(x,H) \Big)^2 dx \sim  a_k \mathcal
P_k(\alpha) H(\log X)^{k^2-1}\;,\quad X\to \infty
\end{equation}
where
$P_k(\alpha)$ is given by
\begin{equation}\label{def of P_k}
\mathcal P_k(\alpha) =(1-\alpha)^{k^2-1} \gamma_k(\frac 1{1-\alpha})
\;.
\end{equation}
Here
\begin{equation} 
    \label{eq:def of gamma}
    \gamma_k(c) = \frac{1}{k!\, G(1+k)^2} \int_{[0,1]^k} \delta(t_1 +
    \ldots + t_k -c) \prod_{i< j}(t_i-t_j)^2\, dt_1\ldots dt_k,
\end{equation}
$G$ is the Barnes $G$-function, so that for
positive integers $k$, $G(1+k) = 1!\cdot 2! \cdot 3! \cdots (k-1)!$.
\end{conj}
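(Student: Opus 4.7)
The plan is to approach Conjecture~\ref{conj:mean square SI} via a shifted-moment calculation in the spirit of the function-field proof of Keating--Rodgers--Roditty-Gershon--Rudnick, combined with the Conrey--Farmer--Keating--Rubinstein--Snaith (CFKRS) recipe for moments of $\zeta(s)$. First I would apply Perron's formula to write $S_k(x+H)-S_k(x)$ as a contour integral of $\zeta(s)^k\bigl((x+H)^s-x^s\bigr)/s$, subtract the smooth main term $xP_{k-1}(\log x)$ coming from the residue at $s=1$, and recast
\begin{equation*}
    \frac{1}{X}\int_X^{2X}\Delta_k(x,H)^2\,dx
\end{equation*}
as a weighted mean square of $\zeta(\tfrac{1}{2}+it)^k$ with a short-interval kernel concentrated near the scale $t\asymp X/H=X^{1-\alpha}$.

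Next I would insert $2k$ small complex shifts $\alpha_1,\dots,\alpha_k,\beta_1,\dots,\beta_k$ and apply the CFKRS recipe to the resulting shifted moment, obtaining a sum of contour integrals of products of shifted zetas times an explicit Euler product whose leading contribution at the origin reproduces the arithmetic factor $a_k$ in~\eqref{Lester const}. Passing the short-interval kernel through and sending the shifts to zero, the random-matrix residue should collapse to the Weyl integral
\begin{equation*}
    \frac{1}{k!\,G(1+k)^2}\int_{[0,1]^k}\delta\Bigl(\sum_{i} t_i - c\Bigr)\prod_{i<j}(t_i-t_j)^2\,dt_1\cdots dt_k
\end{equation*}
evaluated at $c=1/(1-\alpha)$, with the additional $(1-\alpha)^{k^2-1}$ factor arising from the Jacobian of the rescaling between the shift scale $X^{1-\alpha}$ and the interval length $H=X^\alpha$. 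This identification is already implicit in the CUE calculation producing the factor $G(1+k)^2$ in the Keating--Snaith conjecture, now cut by the delta function coming from the Fourier transform of the short-interval indicator; the purpose of the present paper's study of $\gamma_k(c)$ is precisely to make the right-hand side above tractable as a function of $c$.

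The main obstacle is twofold. First, the CFKRS prediction for the shifted moment is itself a conjecture, so the outline above produces only a conditional statement unless one can prove the underlying shifted convolutions $\sum_{n\le X}d_k(n)d_k(n+h)$ with strong uniformity in $h$; this is the shifted divisor problem, which is open for $k\ge 3$. Second, the precise threshold $\alpha<1-1/k$ is the point at which $H$ becomes smaller than $X^{1/k}$ and the off-diagonal contributions in the $d_k\times d_k$ variance cease to be controllable by current technology; handling them for $k\ge 3$ is the very obstruction that prevents an unconditional proof of the $2k$-th moment of $\zeta$. I would therefore expect realistic unconditional progress to be confined to subranges of $\alpha$ where delta-method or Kloosterman-sum techniques still bound the relevant off-diagonals, as in the known results for $k=2$ and partial results for $k=3$.
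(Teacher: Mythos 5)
The statement you are asked to prove is stated in the paper as a \emph{conjecture}, attributed to Keating, Rodgers, Roditty-Gershon, and Rudnick~\cite{KRRR}; the paper contains no proof of it, and none is known. So the honest assessment is that your proposal is not, and cannot at present be, a proof: as you yourself acknowledge, every load-bearing step --- the CFKRS recipe for the shifted $2k$-th moment, and the shifted convolution sums $\sum_{n\le X} d_k(n)d_k(n+h)$ with uniformity in $h$ --- is itself an open conjecture for $k\ge 3$. What you have written is a heuristic derivation of the shape of the right-hand side, together with a correct diagnosis of why it cannot be made unconditional. That is a reasonable thing to produce, but it should be labelled as a heuristic, not a proof attempt; in particular the claim that the random-matrix residue ``should collapse'' to the Weyl integral, and that the factor $(1-\alpha)^{k^2-1}$ ``arises from the Jacobian,'' are exactly the points where a rigorous argument would have to do real work, and you give no mechanism for either.

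It is also worth noting that your route to the conjecture differs from the one actually taken in~\cite{KRRR}. They did not pass through the CFKRS recipe for moments of $\zeta$; rather, they proved a genuine theorem in the function-field setting --- the variance of sums of $d_k$ in short intervals in $\F_q[t]$, in the limit $q\to\infty$, equals the matrix integral $I_k(m;N)$ of~\eqref{def of I} over $U(N)$ --- computed its asymptotics (Theorem~\ref{theorem:Asymp of I}, which produces $\gamma_k(c)$ and the normalization $1/(k!\,G(1+k)^2)$), and then transferred the prediction to $\Z$ by the usual dictionary, with $N$ corresponding to $\log X$ and $m/N$ to $1/(1-\alpha)$; the substitution $c=1/(1-\alpha)$ together with $H(\log X)^{k^2-1}=H\,N^{k^2-1}(1-\alpha)^{k^2-1}\cdot(1-\alpha)^{-(k^2-1)}$ is what produces the factor $(1-\alpha)^{k^2-1}$ in~\eqref{def of P_k}, not a Jacobian in a contour integral. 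The consistency check the paper cites is Lester's theorem~\cite{L} in the range $1-\frac{1}{k-1}<\alpha<1-\frac{1}{k}$, which is the kind of partial unconditional result your last paragraph correctly anticipates.
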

For $1-\frac 1{k-1}<\alpha<1-\frac 1k$, the conjecture is consistent with a theorem of Lester~\cite{L}.

Let $U$ be an $N\times N$ matrix.
The {\em secular coefficients} $\Sc_j(U)$ are the coefficients  of the
characteristic polynomial of $U$:
\begin{equation}
\det(I+xU) =\sum_{j=0}^N \Sc_j(U) x^j
\end{equation}
Thus $\Sc_0(U)=1$, $\Sc_1(U) = \tr U$, $\Sc_N(U) = \det U$.  The secular coefficients are the elementary symmetric functions in
the eigenvalues of $U$.

Define the matrix integrals, with respect to Haar measure,
over the group $U(N)$ of $N\times N$ unitary matrices:
\begin{equation}\label{def of I}
I_k(m;N):=\int_{U(N)} \Big| \sum_{\substack{j_1+\dots + j_k=m\\
0\leq j_1,\dots, j_k \leq N}}\Sc_{j_1}(U)  \dots
\Sc_{j_k}(U)\Big|^2 dU\;.
\end{equation}

\begin{thm}[KR$^3$]\label{theorem:Asymp of I}
\label{integral} Let $c := m/N$. Then for $c \in [0,k]$,
\begin{equation}
\label{theorem:poly_approx} I_k(m;N) = \gamma_k(c) N^{k^2-1} +
O_k(N^{k^2-2}),
\end{equation}
with
\begin{equation}
    \gamma_k(c) = \frac{1}{k!\, G(1+k)^2} \int_{[0,1]^k} \delta(t_1 +
    \ldots + t_k-c) \prod_{i< j}(t_i-t_j)^2\, dt_1 \ldots dt_k,
\end{equation}
%
\end{thm}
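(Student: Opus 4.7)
The plan is to use Schur function orthogonality to compute $I_k(m;N)$ exactly as a sum of squared $GL_k$ dimensions, and then identify the leading asymptotics of this sum as a Riemann sum for the defining integral of $\gamma_k(c)$.

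First, the inner sum of secular coefficients is precisely $[z^m]\det(I+zU)^k$. Applying the dual Cauchy identity $\prod_{i,l}(1+z_i x_l)=\sum_\lambda s_\lambda(z)s_{\lambda'}(x)$ with the $k$ variables $z_i$ specialized to a common value $z$ and the $N$ variables $x_l$ to the eigenvalues of $U$ yields
\begin{equation*}
\det(I+zU)^k=\sum_{\lambda\subseteq (N^k)} z^{|\lambda|}\,s_\lambda(\underbrace{1,\ldots,1}_{k})\,s_{\lambda'}(U),
\end{equation*}
where $\lambda'$ is the conjugate partition and the sum is restricted to $\lambda$ fitting inside a $k\times N$ box, since $s_\lambda$ in $k$ variables vanishes when $\ell(\lambda)>k$ and $s_{\lambda'}(U)$ vanishes when $\ell(\lambda')>N$. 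Extracting the coefficient of $z^m$ and invoking orthogonality of Schur characters on $U(N)$ collapses the integral to
\begin{equation*}
I_k(m;N)=\sum_{\substack{|\lambda|=m\\ \lambda\subseteq (N^k)}} s_\lambda(1,\ldots,1)^2.
\end{equation*}

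Next, the Weyl dimension formula gives $s_\lambda(1,\ldots,1)=G(k+1)^{-1}\prod_{i<j}(\lambda_i-\lambda_j+j-i)$. Substituting $\mu_i=\lambda_i+k-i$ converts the weakly decreasing $\lambda$ with $\lambda_i\in[0,N]$ into strictly decreasing $\mu$ with $\mu_i\in\{0,\ldots,N+k-1\}$ and $\sum\mu_i=m+k(k-1)/2$. Since $\prod_{i<j}(\mu_i-\mu_j)^2$ vanishes whenever two coordinates coincide, after dividing by $k!$ the distinctness and ordering conditions may both be dropped, giving the closed form
\begin{equation*}
I_k(m;N)=\frac{1}{k!\,G(k+1)^2}\sum_{\substack{\mu\in\{0,\ldots,N+k-1\}^{k} \\ \mu_1+\cdots+\mu_k=m+k(k-1)/2}}\prod_{i<j}(\mu_i-\mu_j)^2.
\end{equation*}

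Third, rescale by setting $\mu_i=N t_i$. The Vandermonde squared scales as $N^{k(k-1)}\prod_{i<j}(t_i-t_j)^2$, and parametrizing the $(k-1)$-dimensional affine slice $\sum\mu_i=\text{const}$ by its first $k-1$ coordinates produces a $(k-1)$-dimensional Riemann sum with lattice spacing $1/N$, contributing an additional factor $N^{k-1}$. The combined factor $N^{k^2-1}$ matches the advertised order, and the leading term is exactly
\begin{equation*}
\frac{N^{k^2-1}}{k!\,G(k+1)^2}\int_{[0,1]^k}\delta\Bigl(\sum_i t_i-c\Bigr)\prod_{i<j}(t_i-t_j)^2\,dt_1\cdots dt_k=\gamma_k(c)N^{k^2-1},
\end{equation*}
the $O(1/N)$ displacement of the affine constraint (from the shift $m\mapsto m+k(k-1)/2$) being absorbed into the error.

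The principal technical obstacle is establishing the remainder $O(N^{k^2-2})$. The integrand is a polynomial in the $t_i$ and therefore $C^\infty$ on the closed cube, so a multivariate Euler--Maclaurin bound on the constrained lattice sum gives an error of size $N^{-1}$ relative to the main term. Contributions from the boundary faces where some $\mu_i$ lies near $0$ or $N+k-1$, together with the displacement of the affine constraint, all occur at this same order. Each individual estimate is elementary, but the bookkeeping of boundary pieces of the $(k-1)$-dimensional polytope against the Vandermonde weight is the main technical work.
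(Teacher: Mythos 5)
This theorem is stated in the paper without proof; it is imported verbatim from the cited reference [KRRR], so there is no internal argument to compare against. Your proposal follows what is essentially the route of that reference: the dual Cauchy identity plus Schur orthogonality to obtain the exact formula $I_k(m;N)=\sum_{|\lambda|=m,\,\lambda\subseteq(N^k)} s_\lambda(1^k)^2$, the Weyl dimension formula and the substitution $\mu_i=\lambda_i+k-i$ to rewrite this as a Vandermonde-weighted lattice sum over a hyperplane slice of a box, and a Riemann-sum limit to produce $\gamma_k(c)N^{k^2-1}$. The identity portion of your argument is complete and correct (including the observation that the ordering and distinctness constraints on $\mu$ can be dropped at the cost of $1/k!$, and that the constraints $\mu_i\in\{0,\dots,N+k-1\}$ are equivalent to $\lambda\subseteq(N^k)$ for strictly decreasing $\mu$). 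The one piece you only assert is the uniform $O_k(N^{k^2-2})$ remainder: the Euler--Maclaurin comparison must be carried out uniformly in $c=m/N\in[0,k]$, where the slice polytope changes combinatorial type as $c$ crosses integers, and the $O(1/N)$ displacements of both the box and the affine constraint must be tracked against the Lipschitz continuity of $\gamma_k$. You correctly identify this as the residual technical work; it is indeed where the labour in the source lies, but nothing in your outline would fail there.
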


KR$^3$ also proved the matrix integral satisfies a functional equation $I_k(m;N) = I_k(kN-m;N)$,
from which it follows that 
\begin{equation}
\gamma_k(c) = \gamma_k(k-c),
\end{equation}
and also that
\begin{thm}[KR$^3$]\label{theorem:PP} 
\label{integral analytic}
\begin{equation}
\label{gamma analytic}
\gamma_k(c) =\sum_{0\leq \ell <c}\binom{k}{\ell}^2 (c-\ell)^{(k-\ell)^2+\ell^2-1} g_{k,\ell} (c-\ell)
\end{equation}
where $g_{k,\ell}(c-\ell)$ are (complicated) polynomials in $c-\ell.$ 
\end{thm}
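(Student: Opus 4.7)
The plan is to use an inclusion--exclusion argument to convert the integral over the hypercube $[0,1]^k$ into an alternating sum of integrals over shifted cones in $[0,\infty)^k$, and then apply a scaling argument to each piece to expose the polynomial structure on the interval $(\ell,\ell+1)$ claimed by the theorem.

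First I would write
$$\int_{[0,1]^k} F(t)\,dt \;=\; \sum_{S\subseteq\{1,\ldots,k\}}(-1)^{|S|}\int_{\{t\in[0,\infty)^k\,:\,t_i\ge 1\;\forall\,i\in S\}} F(t)\,dt$$
applied to $F(t)=\delta(t_1+\cdots+t_k-c)\prod_{i<j}(t_i-t_j)^2$. Since the squared Vandermonde is symmetric in the $t_i$, the inner integral depends on $S$ only through $\ell=|S|$, yielding $\binom{k}{\ell}$ identical contributions. For $S=\{1,\ldots,\ell\}$, I perform the change of variables $t_i\mapsto t_i+1$ for $i\le\ell$; the delta constraint becomes $\sum t_i=c-\ell$ (so the term vanishes unless $\ell<c$), and the Vandermonde squared factors into three blocks
$$\prod_{i<j\le\ell}(t_i-t_j)^2\;\prod_{\ell<i<j}(t_i-t_j)^2\;\prod_{i\le\ell<j}\bigl((t_i-t_j)+1\bigr)^2,$$
which is still a polynomial in the $t_i$.

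I would then rescale $t_i=(c-\ell)u_i$ so that $u$ lies on the simplex $\{u_i\ge 0,\;\sum u_i=1\}$. The delta together with the Jacobian contributes $(c-\ell)^{k-1}$; the two pure Vandermonde blocks contribute $(c-\ell)^{\ell(\ell-1)+(k-\ell)(k-\ell-1)}$; and the mixed block becomes
$$\prod_{i\le\ell<j}\bigl((c-\ell)(u_i-u_j)+1\bigr)^2,$$
which expands as a polynomial in $(c-\ell)$ of degree at most $2\ell(k-\ell)$ with coefficients polynomial in the $u$'s. Integrating over the fixed simplex yields a polynomial $g_{k,\ell}(c-\ell)$, and combining the exponents gives
$$k-1+\ell(\ell-1)+(k-\ell)(k-\ell-1)=\ell^2+(k-\ell)^2-1,$$
matching the theorem exactly. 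Summing over $0\le\ell<c$ and absorbing the constants $1/(k!\,G(1+k)^2)$, $(-1)^\ell$, and combinatorial factors into the polynomial $g_{k,\ell}$ produces the claimed form.

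The main obstacle I anticipate is the combinatorial bookkeeping needed to recover the exact prefactor $\binom{k}{\ell}^2$, since the inclusion--exclusion symmetry only produces a single $\binom{k}{\ell}$. The extra factor should emerge from a secondary symmetry between the $\ell$-block and the $(k-\ell)$-block in the simplex integration, or equivalently from a Selberg- or Dixon-type evaluation of the block integrals that naturally introduces the second $\binom{k}{\ell}$. A secondary but routine point is to verify that the simplex integration produces genuine polynomial (rather than rational or piecewise) dependence on $c-\ell$; this is clear here because we integrate a polynomial against Lebesgue measure on a $c$-independent domain.
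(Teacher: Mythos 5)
Your argument is essentially correct, but note first that the paper does not prove this statement at all: it is quoted as a theorem of Keating--Rodgers--Roditty-Gershon--Rudnick \cite{KRRR}, so there is no in-paper proof to compare against. What the paper does derive by its own (different) route --- the Fourier-transform/Hankel-determinant representation \eqref{eq:gamma as ft of det} --- is the weaker structural information that $\gamma_k(c)$ is a polynomial of degree at most $k^2-1$ on each interval $[j,j+1]$, together with (Theorem~\ref{thm diff gamma}, proved via the asymptotics of $I_k(u)$ from \cite{DHI}) the smoothness of order $\nu(j,k)-2$ at each transition point $c=j$, which is exactly the information encoded in the exponents $(k-\ell)^2+\ell^2-1$ of \eqref{gamma analytic}. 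Your inclusion--exclusion over the faces of the cube, followed by the shift $t_i\mapsto t_i+1$ on the chosen $\ell$ coordinates and the rescaling onto the unit simplex, is a clean direct proof: the exponent count $k-1+\ell(\ell-1)+(k-\ell)(k-\ell-1)=\ell^2+(k-\ell)^2-1$ is right, the mixed block $\prod_{i\le\ell<j}\bigl((c-\ell)(u_i-u_j)+1\bigr)^2$ integrated over the fixed simplex manifestly yields a polynomial in $c-\ell$, and the support condition $\ell<c$ falls out of the delta constraint. The one thing you flag as the ``main obstacle'' --- recovering the prefactor $\binom{k}{\ell}^2$ rather than the single $\binom{k}{\ell}$ your symmetry argument produces --- is not actually an obstacle for the statement as written, since the $g_{k,\ell}$ are unspecified polynomials and any constant (including a second binomial coefficient, the sign $(-1)^\ell$, and the normalization $1/(k!\,G(1+k)^2)$) can be absorbed into them; the factor $\binom{k}{\ell}^2$ in \eqref{gamma analytic} is a normalization choice, not something your proof needs to reproduce.
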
 

For a fixed $k$, $\gamma_k(c)$ is a piecewise polynomial function of $c$.  Specifically,  it is a fixed polynomial for $r\leq c<r+1$ ($r$ integer), and each time the value of $c$ passes through an integer it
becomes a different polynomial.

For example,
\begin{equation}
\gamma_2(c) = \frac {1}{2!} \int_{\substack{0\leq t_1\leq
1\\0\leq c-t_1\leq 1}} (t_1-(c-t_1))^2\,dt_1 = \begin{cases}
\frac{c^3}{3!},& 0\leq c\leq 1 \\  & \\ \frac {(2-c)^3}{3!},&1\leq
c\leq 2\end{cases}
\end{equation}
and
\begin{equation}\label{eq:rod3easy}
\gamma_3(c) =
\begin{cases} \frac 1{8!}c^8,&0<c<1\\
\frac 1{8!}(3-c)^8,&2<c<3
\end{cases}
\end{equation}
while for $1<c<2$ we get
\begin{multline}\label{eq:rod3mid}
    \gamma_3(c)=\frac 1{8!}\Big(-2 c^8+24 c^7-252 c^6+1512 c^5-4830c^4 \\
    +8568 c^3-8484 c^2+4392 c-927\Big).
\end{multline}

\section{Relationship to a Hankel determinant}

Our starting point is to derive an expression for $\gamma_k(c)$ as the Fourier transform of a Hankel determinant.
In~\eqref{eq:def of gamma}, we substitute for the Dirac delta function:
\begin{equation}
    \label{eq:delta}
    \delta(x) = \int_{-\infty}^{\infty} \exp(2\pi i x y) dy.
\end{equation}
One can be rigorous by writing $\delta(x)$ as the limit of a highly peaked Gaussian, i.e.
as the inverse Fourier transform of a highly spread out Gaussian, but for convenience we proceed
as above.

Thus
\begin{multline}
    \label{eq:fourier transform 1}
    \gamma_k(c) = \frac{1}{k!\, G(1+k)^2}
    \int_{-\infty}^{\infty} \exp(2\pi i u c)
    \int_{[0,1]^k} \exp\left( -2\pi i u \sum t_j\right) \\
    \times \prod_{i< j}(t_i-t_j)^2\, dt_1 \ldots dt_k du.
\end{multline}
We also note a more symmetric form of the above by substituting $t_j=x_j+1/2$, so that
\begin{multline}
    \label{eq:fourier transform 2}
    \gamma_k(c) = \frac{1}{k!\, G(1+k)^2}
    \int_{-\infty}^{\infty} \exp(2\pi i u (c-k/2))
    \int_{[-1/2,1/2]^k} \exp\left( -2\pi i u \sum x_j\right) \\
    \times \prod_{i< j}(x_i-x_j)^2\, dt_1 \ldots dx_k du.
\end{multline}

We will prove the following two formulas for~$\gamma_k(c)$.

\begin{thm}\label{thm:two identities}
\begin{equation}
    \gamma_k(c)=
    \frac{1}{G(1+k)^2(2\pi i)^{k(k-1)}}
    \int_{-\infty}^{\infty} \exp(2\pi i u c)
    \det_{k\times k} \left( f^{(i+j-2)}(u)\right)
     du
    \label{eq:gamma as ft of det}
\end{equation}
where
$f(u)= \int_0^1 \exp(-2 \pi i u t) dt = (1-\exp(-2\pi i u))/(2 \pi i u)$.
The determinant is a Hankel determinant.

A similar, but more symmetric, identity is:
\begin{multline}
    \label{eq:gamma as ft of det b}
    \gamma_k(c) =
    \frac{1}{G(1+k)^2(2\pi i)^{k(k-1)}}
    \int_{-\infty}^{\infty} \exp(2\pi i u (c-k/2))
    \det_{k\times k} \left( h^{(i+j-2)}(u)\right)
     du
\end{multline}
where $h(u)= \int_{-1/2}^{1/2} \exp(-2 \pi i u x) dx = \sin(\pi u)/(\pi u)$.
\end{thm}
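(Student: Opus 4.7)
The plan is to start from the representation \eqref{eq:fourier transform 1} and apply the Andr\'eief (a.k.a.\ Heine--Andr\'eief or de Bruijn) identity to convert the $k$-fold integral over $[0,1]^k$ into a single $k\times k$ determinant. Writing the squared Vandermonde as $\prod_{i<j}(t_i-t_j)^2 = \det(t_i^{j-1})^2$ and taking $w(t) = \exp(-2\pi i u t)$ as a ``weight,'' the standard identity
\begin{equation*}
    \int_{[0,1]^k} \prod_{i<j}(t_i-t_j)^2 \prod_{\ell=1}^k w(t_\ell)\, dt_\ell
    = k!\, \det\!\left(\int_0^1 t^{i+j-2} w(t)\, dt\right)_{i,j=1}^k
\end{equation*}
rewrites the inner integral in \eqref{eq:fourier transform 1} as $k!$ times the determinant of the moments
$M_{ij}(u) := \int_0^1 t^{i+j-2} \exp(-2\pi i u t)\, dt$.

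Next, I would identify those moments as derivatives of $f$. Since
$f^{(n)}(u) = \int_0^1 (-2\pi i t)^n \exp(-2\pi i u t)\, dt$, one has
$M_{ij}(u) = f^{(i+j-2)}(u)/(-2\pi i)^{i+j-2}$. Because
$(-2\pi i)^{i+j-2} = (-2\pi i)^{i-1}(-2\pi i)^{j-1}$, this factor can be extracted from row $i$ and column $j$ of the determinant, contributing a total factor of
\begin{equation*}
    \left(\prod_{i=1}^k (-2\pi i)^{-(i-1)}\right)^{\!2} = (-2\pi i)^{-k(k-1)} = (2\pi i)^{-k(k-1)},
\end{equation*}
the last equality since $k(k-1)$ is even. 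Combined with the prefactor $1/(k!\, G(1+k)^2)$ in \eqref{eq:fourier transform 1}, the $k!$ from Andr\'eief cancels, giving exactly \eqref{eq:gamma as ft of det}.

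The second identity \eqref{eq:gamma as ft of det b} is proven by the same argument, but starting from the symmetric representation \eqref{eq:fourier transform 2} and running the computation with the weight $\exp(-2\pi i u x)$ on $[-1/2,1/2]$ instead of $[0,1]$. One checks that $h^{(n)}(u) = \int_{-1/2}^{1/2}(-2\pi i x)^n \exp(-2\pi i u x)\, dx$, so the moment computation goes through verbatim with $f$ replaced by $h$.

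The main obstacle is not algebraic but analytic: the outer $u$-integral is only conditionally (in fact, distributionally) convergent, since we have already passed through the formal substitution \eqref{eq:delta}. The clean way to justify interchanging the $u$-integral with the Andr\'eief manipulation is exactly the regularization mentioned after \eqref{eq:delta}: replace $\delta(x)$ by $\delta_\varepsilon(x) = \int e^{2\pi i x y - \pi \varepsilon^2 y^2}\, dy$, carry out all the determinant algebra with the absolutely convergent Gaussian factor in place, and then let $\varepsilon \to 0^+$. Since the inner $[0,1]^k$-integral is bounded uniformly in $u$, this passage to the limit is routine, and the identities \eqref{eq:gamma as ft of det} and \eqref{eq:gamma as ft of det b} hold in the same regularized sense as the Fourier inversion that launched the calculation.
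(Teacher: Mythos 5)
Your proposal is correct and follows essentially the same route as the paper: apply the Andr\'eief identity to \eqref{eq:fourier transform 1} with two Vandermonde determinants and weight $\exp(-2\pi i u t)$, recognize the moment entries as derivatives of $f$, and extract the factors $(-2\pi i)^{i-1}(-2\pi i)^{j-1}$ from rows and columns (your version correctly identifies the extracted factor as $(-2\pi i)^{i+j-2}$, where the paper's prose has a small typo writing $-2\pi i u$), then repeat with the symmetric form for $h$. Your closing remark on regularizing the delta function is a welcome elaboration of the justification the paper only gestures at after \eqref{eq:delta}.
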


Our proof will use the Andreief identity:


\begin{lem}[Andreief]
Let $A_k(t), B_k(t),r(t)$ be integrable functions on the interval $[a,b]$.
Then
\begin{eqnarray}
    \label{eq:andreief b}
    &&\frac{1}{N!}
    \int_{[a,b]^N}
    \prod_{j=1}^N r(t_j)
    \det_{N\times N} \left( A_k(t_j)\right)
    \det_{N\times N} \left( B_k(t_j)\right)
    dt_1 \ldots
    dt_N  \\
    &&=
    \det_{N\times N}
    \left(
        \int_a^b r(t) A_j(t) B_k(t) dt
    \right).
\end{eqnarray}
\end{lem}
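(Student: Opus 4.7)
The plan is to prove the Andreief identity by expanding both determinants via the Leibniz formula, factoring the resulting multi-dimensional integral into a product of one-dimensional integrals, and then repackaging the double sum over permutations as a single determinant. First, I would write
$$\det_{N\times N}\bigl(A_k(t_j)\bigr) = \sum_{\sigma \in S_N} \operatorname{sgn}(\sigma)\prod_{j=1}^N A_{\sigma(j)}(t_j),$$
and similarly, with a second permutation $\tau$, for the determinant of $(B_k(t_j))$. Substituting these into the left-hand side converts the product of two determinants into a finite double sum over $S_N \times S_N$, which by linearity can be pulled outside of the integral.

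For fixed $(\sigma,\tau)$, the integrand becomes $\prod_{j=1}^N r(t_j) A_{\sigma(j)}(t_j) B_{\tau(j)}(t_j)$, a product of functions in the separate variables $t_j$. By Fubini, the $N$-fold integral over $[a,b]^N$ factors as
$$\prod_{j=1}^N \int_a^b r(t)\, A_{\sigma(j)}(t)\, B_{\tau(j)}(t)\, dt \;=\; \prod_{j=1}^N M_{\sigma(j),\tau(j)}, \qquad M_{jk} := \int_a^b r(t)\, A_j(t)\, B_k(t)\, dt.$$

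The final step is to collapse the double sum. Reindexing by $k = \sigma(j)$ yields the identity $\prod_j M_{\sigma(j),\tau(j)} = \prod_k M_{k,\,\tau\sigma^{-1}(k)}$. Setting $\pi := \tau\sigma^{-1}$ gives $\operatorname{sgn}(\sigma)\operatorname{sgn}(\tau) = \operatorname{sgn}(\pi)$, so the double sum over $(\sigma,\tau)$ becomes
$$\sum_{\sigma \in S_N}\sum_{\pi \in S_N} \operatorname{sgn}(\pi)\prod_{k=1}^N M_{k,\pi(k)} \;=\; N!\,\sum_{\pi\in S_N}\operatorname{sgn}(\pi)\prod_{k=1}^N M_{k,\pi(k)} \;=\; N!\,\det(M),$$
and the $1/N!$ prefactor on the left cancels to give the claimed formula. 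There is no real obstacle: everything reduces to interchanging a finite sum with an integral. The only point meriting a word of justification is the use of Fubini, which is immediate from the hypothesis that $r, A_j, B_k$ are integrable on $[a,b]$ (read as saying the product $r A_j B_k$ is in $L^1[a,b]$, which is what is needed for the one-dimensional integrals defining $M_{jk}$ to exist and for the factorization of the $N$-fold integral to be valid).
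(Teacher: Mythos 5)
Your proof is correct and is the standard argument for the Andreief (Heine) identity: Leibniz expansion of both determinants, Fubini factorization into the matrix entries $M_{jk}$, and collapse of the double sum over $S_N\times S_N$ via the substitution $\pi=\tau\sigma^{-1}$, which produces the factor $N!$ cancelling the prefactor. The paper states this lemma without proof, treating it as classical, so there is nothing to compare against; your observation that \emph{integrable} should be read so that each product $rA_jB_k$ lies in $L^1[a,b]$ is the right way to make the hypothesis honest, and in the paper's application the functions are continuous on a compact interval, so this is automatic.
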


\begin{proof}[Proof of Theorem~\ref{thm:two identities}]
To prove the first identity in~\ref{thm:two identities}, apply Andreief's identity to equation~\eqref{eq:fourier transform 1}, with $A$ and $B$ two Vandermonde determinants,
and $r(t)=\exp(-2\pi i u t)$, to get:
\begin{multline}
    \gamma_k(c) = \\
    \frac{1}{G(1+k)^2}
    \int_{-\infty}^{\infty} \exp(2\pi i u c)
    \det_{k\times k} \left( \int_0^1 \exp(-2\pi i u t) t^{i+j-2} dt\right) 
    du
\end{multline}
The entries of the matrix can be expressed as derivatives, with respect to $u$, of
$\int_0^1 \exp(-2 \pi i u t) dt$, and we can then correct for the extra powers of $-2\pi i u$ by
dividing the $l$-th row by $(-2\pi i u)^{l-1}$ and the $j$-th column by $(-2\pi i u)^{j-1}$, thus by
$(-2\pi i u)^{k(k-1)}$ in total (and then dropping the $-1$ since $k(k-1)$ is even).

Using the second form~\eqref{eq:fourier transform 2}, we similarly have~\eqref{eq:gamma as ft of det b}
where $h(u)= \int_{-1/2}^{1/2} \exp(-2 \pi i u x) dx = \sin(\pi u)/(\pi u)$.
\end{proof}

Some of the basic properties of $\gamma_k(c)$ can be read from~\eqref{eq:gamma as ft of det}.
For example, the inverse Fourier transform of $f^{(j)}$ is equal to $(-2\pi i)^j c^j$ on the interval
$(0,1)$ and 0 outside this interval. Expanding the determinant as a permutation sum,
each summand thus has inverse Fourier transform a convolution
of such terms, and is thus supported on $c \in (0,k)$.

It also shows that $\gamma_k(c)$ is a polynomial
in $c$ on each interval $[j,j+1]$, $0 \leq j \leq k-1$ of degree at most $k^2-1$,
because the $i,j$ entry has inverse Fourier Transform a polynomial
in $c$ on $(0,1)$ of degree $i+j-2$.
Multiply out the determinant as a permutation sum. Each summand, when integrated with respect to $c$,
is the inverse Fourier transform of a product of $k$ functions, and hence
consists of $k-1$ convolutions of the individual inverse Fourier transforms. Each convolution increases the degree
of the polynomial by 1. Hence, each permutation $\sigma$ has its resulting degree bounded by
$(k-1)+\sum_{i=1}^k (i+\sigma_i-2) = k^2-1$.


We can thus use~\eqref{eq:gamma as ft of det} to compute the polynomials $\gamma_k(c)$ by
evaluating it at $\geq k^2$ rational
values of $c$, say, in each unit interval and interpolating. In this manner, we determined the polynomials
$\gamma_k(c)$ listed in Table~\ref{table:gamma_k} and~\ref{table:gamma_k b}.

In the symmetric form~\eqref{eq:gamma as ft of det b}, one also sees that $\gamma_k(c) = \gamma_k(c-k)$,
by substituting $-u$ for $u$, and using the fact
that the determinant in that formula is an even function of $u$.

Setting \begin{equation} \label{eq:g} g(t) = \int_0^1 \exp(-tx) dx, \end{equation}
so that \begin{equation} g^{(n)}(t) = \int_0^1 (-x)^n \exp(-tx) dx, \end{equation}
and letting
\begin{equation}
    D_k(t) = \det_{k \times k} ( g^{(i+j-2)}(t) ),
\end{equation}
we have that~\eqref{eq:gamma as ft of det} can be written as
\begin{equation}
    \label{eq:gamma as ft D_k}
    \gamma_k(c) = \frac{1}{G(k+1)^2} \int_{-\infty}^{\infty} \exp(2 \pi i c u)  D_k(2 \pi i u) du.
\end{equation}

$D_k(t)$ also satisfies a Painlev\'e V equation. This is proven 
in more generality in a paper of Basor, Chen and Ehrhardt~\cite{BCE} (4.38 of that paper,
with $a=0$, $b=t$, $\alpha = 0$).
Specifically, the following holds.

\begin{thm}
Let
\begin{equation}
    H_k(t)=
    t \frac{D'_k(t)}{D_k(t)}+k^2.
\end{equation}
Then
\begin{multline}
    (tH''_k(t))^2 = \\
    (H_k(t)+(2k-t)H'_k(t))^2
    -4(H'_k(t))^2(k^2-H_k(t)+tH'_k(t)).
\end{multline}
\end{thm}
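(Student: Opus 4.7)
The plan is to rewrite $D_k(t)$ as a standard Hankel moment determinant, rescale to match the conventions of Basor-Chen-Ehrhardt~\cite{BCE}, and then invoke their equation~(4.38). First I would simplify the entries. Since $g^{(n)}(t) = (-1)^n \int_0^1 x^n e^{-tx}\,dx$, the sign $(-1)^{i+j-2}$ in the $(i,j)$ entry may be absorbed by conjugating the matrix with $\operatorname{diag}((-1)^1,\dots,(-1)^k)$, which leaves the determinant unchanged. Hence
\begin{equation*}
    D_k(t) = \det_{k\times k}\left(\int_0^1 x^{i+j-2} e^{-tx}\,dx\right),
\end{equation*}
the $k\times k$ Hankel determinant of the exponential weight $e^{-tx}$ on $[0,1]$.

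Next I would rescale to the form treated in~\cite{BCE}. Substituting $x=y/t$ gives $\int_0^1 x^n e^{-tx}\,dx = t^{-n-1}\int_0^t y^n e^{-y}\,dy$, and pulling a factor $t^{-(i-1)}$ from row $i$ and $t^{-j}$ from column $j$ yields $D_k(t) = t^{-k^2}\,\widetilde D_k(t)$, where
\begin{equation*}
    \widetilde D_k(t) := \det_{k\times k}\left(\int_0^t y^{i+j-2} e^{-y}\,dy\right).
\end{equation*}
Consequently $t\,\widetilde D_k'(t)/\widetilde D_k(t) = t\,D_k'(t)/D_k(t) + k^2 = H_k(t)$, so the apparently ad hoc shift by $k^2$ in the definition of $H_k$ is precisely the power of $t$ extracted in passing from the unit interval to the Laguerre-type interval $[0,t]$.

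Finally, $\widetilde D_k(t)$ is one of the deformed Laguerre-type Hankel determinants studied in~\cite{BCE}: the moment determinant of the weight $x^\alpha e^{-x}$ on $[a,b]$ with $a=0$, $b=t$, $\alpha=0$. Their equation~(4.38), so specialised, asserts that $t\,\widetilde D_k'/\widetilde D_k$ satisfies a sigma-form Painlev\'e V equation whose coefficients collapse at $\alpha=0$ to the $1, 2k, k^2$ appearing in the statement. The main obstacle is bookkeeping: one must carefully match BCE's conventions (the role played by $t$, their normalisation of the Hankel determinant, and their Jimbo-Miwa-Okamoto parameters) against ours, and verify that setting $\alpha=0$ reduces their equation to the one displayed above. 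An independent consistency check is available via the Chen-Ismail ladder-operator approach for the monic orthogonal polynomials of $e^{-y}\,dy$ on $[0,t]$: the resulting compatibility relations, combined with the $t$-Toda equation for $\widetilde D_k$, eliminate the auxiliary recurrence coefficients and reproduce the displayed sigma-form directly.
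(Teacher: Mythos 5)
Your proposal follows essentially the same route as the paper, which simply cites equation (4.38) of Basor--Chen--Ehrhardt with $a=0$, $b=t$, $\alpha=0$ and gives no further argument. Your added bookkeeping --- removing the signs $(-1)^{i+j}$, rescaling $x=y/t$ to get $D_k(t)=t^{-k^2}\widetilde D_k(t)$ so that $H_k(t)=t\,\widetilde D_k'(t)/\widetilde D_k(t)$ explains the shift by $k^2$ --- is a correct and useful elaboration of that citation rather than a different proof.
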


Another interesting feature, is that, while $\gamma_k(c)$ is
given by a different polynomial on each $[j,j+1]$, $0 \leq j \leq k-1$,
$\gamma_k(c)$ can be differentiated $j^2+(k-j)^2-2$ times at $c=j$, i.e. is very smooth.

\def\g{\gamma}
\def\t{\theta}



\begin{thm}\label{thm diff gamma}
     Let $j$ be an integer and $0<j<k$. Define \begin{equation} \nu(c,k)=c^2+(k-c)^2. \end{equation}
     Then $\gamma_k(c)$ is $(\nu(j,k)-2)$-times differentiable at $c=j$. 
\end{thm}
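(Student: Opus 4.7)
The plan is to read the differentiability off the explicit piecewise decomposition already provided by Theorem~\ref{integral analytic}. Fix an integer $j$ with $0<j<k$ and work in a small neighborhood $(j-\delta, j+\delta)$ of $c=j$ with $\delta<1$. In the sum
$$\gamma_k(c) = \sum_{0 \le \ell < c} \binom{k}{\ell}^2 (c-\ell)^{(k-\ell)^2+\ell^2-1} g_{k,\ell}(c-\ell),$$
the summands with $\ell \le j-1$ restrict to honest polynomials of $c$ on the neighborhood (since $c-\ell$ stays positive and bounded away from $0$, the exponent causes no issue), while those with $\ell \ge j+1$ are absent (for $\delta<1$ we have $\ell \ge j+1 > c$). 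Only the $\ell = j$ summand activates as $c$ crosses $j$: it is $0$ for $c \le j$ and equals $\binom{k}{j}^2 (c-j)^{\nu(j,k)-1} g_{k,j}(c-j)$ for $c > j$.

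Accordingly, I would write $\gamma_k(c) = P(c) + R(c)$ on the neighborhood, where $P$ is polynomial and
$$R(c) = \binom{k}{j}^2 (c-j)^{\nu(j,k)-1} g_{k,j}(c-j) \cdot \mathbf{1}_{c>j}.$$
Since $P$ is smooth, everything reduces to showing that $R$ is $(\nu(j,k)-2)$-times differentiable at $c=j$. Setting $m = \nu(j,k)-1 \ge 1$, a short Leibniz-rule calculation confirms that for each $0 \le i \le m-1$, $R^{(i)}$ extends continuously across $c=j$ with value $0$: the left-hand derivative is trivially zero, and the right-hand derivative is a finite sum of terms each carrying a factor $(c-j)^{m-a}$ with $a \le i < m$, which still vanishes at $c=j$. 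Matching the one-sided values yields the existence of $R^{(i)}(j)$ for $i$ up to $\nu(j,k)-2$.

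No serious obstacle arises once Theorem~\ref{integral analytic} is in hand: the whole argument is local bookkeeping, isolating the single summand that switches on at $c=j$ and reading off its order of vanishing $\nu(j,k)-1$. The count is essentially sharp, since the same Leibniz computation gives a right-sided $m$-th derivative $R^{(m)}(j^+) = m!\binom{k}{j}^2 g_{k,j}(0)$ versus $R^{(m)}(j^-)=0$, so a jump generically prevents a further derivative at $c=j$; improving $\nu(j,k)-2$ to $\nu(j,k)-1$ would require a separate vanishing argument for $g_{k,j}(0)$ that one does not expect to hold.
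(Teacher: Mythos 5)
Your proof is correct, but it takes a genuinely different route from the paper. You deduce the result almost immediately from the piecewise-polynomial structure theorem of KR$^3$ (Theorem~\ref{theorem:PP}): near $c=j$ the only summand that switches on is the $\ell=j$ term, which vanishes to order $(k-j)^2+j^2-1=\nu(j,k)-1$, so the standard fact that $x^m h(x)\mathbf{1}_{x>0}$ is $C^{m-1}$ at $0$ gives the claim. That is clean local bookkeeping and it does establish the stated theorem, since the positive direction needs no information about $g_{k,j}(0)$. The paper instead works from its Fourier representation~\eqref{eq:fourier transform 2}, using the large-$u$ asymptotic expansion of the oscillatory integral $I_k(u)$ (Lemma~\ref{lem I_k}, after Dea\~no--Huybrechs--Iserles): the term with frequency $e^{2\pi i u(c-\ell)}$ decays like $u^{-\nu(\ell,k)}$, the $\ell\ne j$ frequencies contribute $C^\infty$ functions at $c=j$, and the $\ell=j$ frequency contributes something $(\nu(j,k)-2)$-times differentiable by uniform convergence. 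What each approach buys: yours is shorter but rests entirely on the cited KR$^3$ theorem, and its sharpness would require knowing $g_{k,j}(0)\ne 0$, which the statement of Theorem~\ref{theorem:PP} does not supply (as you correctly flag). The paper's route is independent of Theorem~\ref{theorem:PP} and comes with the explicit nonzero constant $a(j,k)$, from which it \emph{proves} that the $(\nu(j,k)-1)$-st derivative fails to exist, reducing that failure to the explicit $k=2$ computation of Lemma~\ref{lem not diff}; it also meshes with the asymptotic analysis of $D_k$ used elsewhere in the paper.
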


Note that $\nu(c,k)$ reaches its minimum at $c=\lfloor\frac{k+1}{2}\rfloor$, in which case
\begin{equation} \nu \left( \left\lfloor \frac{k+1}{2} \right\rfloor, k \right) =
\left\lfloor\frac{k^2+1}{2}\right\rfloor.\end{equation} Thus, we have
\begin{cor}
 The function $\gamma_k(c)$ is $(\lfloor\frac{k^2+1}{2}\rfloor-2)$-times differentiable for all $0<c<k.$
\end{cor}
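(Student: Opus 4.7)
The plan is to deduce the result almost immediately from the explicit piecewise decomposition in Theorem~\ref{integral analytic}, which identifies exactly which new summand is switched on in $\gamma_k(c)$ as $c$ crosses each integer.

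First, fixing an integer $j$ with $0 < j < k$, I would examine the two polynomial expressions for $\gamma_k$ on the adjacent intervals $(j-1, j)$ and $(j, j+1)$. Denoting these polynomials by $L$ and $R$ respectively, Theorem~\ref{integral analytic} gives
\begin{equation*}
    R(c) - L(c) = \binom{k}{j}^{2} (c-j)^{(k-j)^{2} + j^{2} - 1}\, g_{k,j}(c-j) = \binom{k}{j}^{2} (c-j)^{\nu(j,k)-1}\, g_{k,j}(c-j),
\end{equation*}
since the only summand whose indicator $\ell < c$ toggles at $c = j$ is the one with $\ell = j$. The right-hand side is a polynomial in $c$ vanishing at $c = j$ to order at least $\nu(j,k) - 1$, so $(R - L)^{(m)}(j) = 0$ for every integer $0 \leq m \leq \nu(j,k) - 2$. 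Hence the one-sided $m$-th derivatives $L^{(m)}(j)$ and $R^{(m)}(j)$ coincide for such $m$, and a routine induction (using at each step the standard fact that continuity at $j$ together with matching one-sided derivatives of a function smooth on each side implies classical differentiability) promotes these matching one-sided values into genuine two-sided derivatives of $\gamma_k$ at $c = j$ through order $\nu(j,k) - 2$, proving the theorem.

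There is no real obstacle once Theorem~\ref{integral analytic} is granted; the combinatorial substance is packaged inside it, and the remaining work is just the bookkeeping that the new summand vanishes at $c = j$ to order $\nu(j,k) - 1$, yielding differentiability one order less. An alternative route through the Hankel-determinant Fourier representation~\eqref{eq:gamma as ft D_k} would require precise large-$|u|$ estimates on $D_k(2\pi i u)$ and looks considerably harder; in particular, it would have to recover the $j$-dependent integer $\nu(j,k) - 2$ rather than some uniform smoothness bound.
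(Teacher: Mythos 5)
Your argument is correct, but it is a genuinely different route from the one the paper takes. You derive everything from the piecewise decomposition of Theorem~\ref{integral analytic}: the only summand that switches on as $c$ crosses the integer $j$ is the $\ell=j$ term, which vanishes at $c=j$ to order at least $(k-j)^2+j^2-1=\nu(j,k)-1$, so the one-sided derivatives of the two adjacent polynomials agree through order $\nu(j,k)-2$, and the standard gluing fact upgrades this to two-sided differentiability; the corollary then follows from the elementary minimization $\min_{1\le j\le k-1}\nu(j,k)=\nu(\lfloor\frac{k+1}{2}\rfloor,k)=\lfloor\frac{k^2+1}{2}\rfloor$, which you should state explicitly since the corollary is the uniform-in-$c$ claim. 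The paper instead proves Theorem~\ref{thm diff gamma} by exactly the Fourier-analytic route you dismiss as ``considerably harder'': it inserts the large-$|u|$ asymptotic expansion of the inner multidimensional integral $I_k(u)$ (Lemma~\ref{lem I_k}, imported from Dea{\~n}o--Huybrechs--Iserles) into the representation~\eqref{eq:gamma as ft D_k}, and each phase $e^{i\pi u(k-2\ell)}$ in that expansion produces precisely the $j$-dependent decay exponent $\nu(\ell,k)$, so the method does recover the exact integer $\nu(j,k)-2$ rather than a uniform bound. What the paper's approach buys, and yours does not, is sharpness: the explicit leading coefficient $a(j,k)\neq 0$ in Lemma~\ref{lem I_k}, combined with Lemma~\ref{lem not diff}, shows the $(\nu(j,k)-2)$-th derivative fails to be differentiable at $c=j$. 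Your route cannot see this without the additional input $g_{k,j}(0)\neq 0$, which Theorem~\ref{integral analytic} as quoted does not supply. For the corollary as literally stated, only the positive direction is needed, so your proof stands; note only that both arguments ultimately outsource their combinatorial core, yours to KR$^3$ and the paper's to~\cite{DHI}.
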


The following lemma is essentially proved in Section 4 of~\cite{DHI}.
\begin{lem}\label{lem I_k}
    Let \begin{equation}
         \label{eq:I}
         I_k(u)=\frac{1}{k!}\int_{-\frac{1}{2}}^{\frac{1}{2}}\cdots \int_{-\frac{1}{2}}^{\frac{1}{2}} e^{-2\pi i u\sum_j t_j} \prod_{j<\ell}(t_j-t_\ell)^2 dt_1\cdots dt_k. \end{equation}
    Then \begin{align}
             I_k(u)=\sum_{c=0}^{k} e^{i\pi u (k-2c)} \left(\frac{a(c,k)}{u^{\nu(c,k)}}+O\left(\frac{1}{u^{\nu(c,k)+1}}\right)\right)
    \end{align}
    where 
    \begin{equation} \nu(c,k)=c^2+(k-c)^2 \end{equation}
    and
    \begin{equation}
       a(c,k) =  (-1)^{c}\, (2\pi i)^{-\nu(c,k)}\, G(c+1)^2 \, G(k-c+1)^2 .
    \end{equation}    
\end{lem}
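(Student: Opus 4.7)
The plan is to extract the large-$u$ asymptotics of $I_k(u)$ by boundary analysis of a multidimensional oscillatory integral. The phase $-2\pi u\sum t_j$ is linear with no critical point in the interior of $[-\tfrac12,\tfrac12]^k$, so iterated integration by parts localizes the asymptotic mass to the corners of the cube. A corner is labeled by a subset $S\subseteq\{1,\dots,k\}$ with $|S|=c$ indicating which coordinates equal $+\tfrac12$ (the remainder being $-\tfrac12$); by the $S_k$-symmetry of the integrand the $\binom{k}{c}$ corners of a given type $c$ contribute equally, so it suffices to analyze one representative per $c$ and multiply by $\binom{k}{c}$.

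Near such a corner, substitute $t_j=\tfrac12-x_j$ for $j\in S$ and $t_j=-\tfrac12+x_j$ for $j\notin S$, with small $x_j\in[0,1]$. The exponential factors as $e^{i\pi u(k-2c)}\exp(2\pi iu\sum_{j\in S}x_j)\exp(-2\pi iu\sum_{j\notin S}x_j)$, and the Vandermonde squared splits to leading order as the product of independent Vandermondes on $S$ and on $S^c$, since the cross factors $(t_i-t_j)^2=(1-x_i-x_j)^2$ for $i\in S$, $j\notin S$ equal $1+O(x)$. Extending each $x_j$-integration to $[0,\infty)$, the local asymptotic decouples into two oscillatory Selberg-type integrals.

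Applying the classical Laguerre identity
\begin{equation*}
\int_{[0,\infty)^m}\prod_{i<j}(y_i-y_j)^2\,e^{-\lambda\sum y_j}\,dy_1\cdots dy_m = m!\,G(m+1)^2\,\lambda^{-m^2}
\end{equation*}
with $\lambda=-2\pi iu$ on the $S$-block and $\lambda=2\pi iu$ on the $S^c$-block (justified via contour rotation into the appropriate half-plane), the single-corner contribution equals
\begin{equation*}
\frac{c!(k-c)!\,G(c+1)^2G(k-c+1)^2\,e^{i\pi u(k-2c)}}{(-2\pi iu)^{c^2}(2\pi iu)^{(k-c)^2}} = \frac{(-1)^c c!(k-c)!\,G(c+1)^2G(k-c+1)^2}{(2\pi iu)^{\nu(c,k)}}\,e^{i\pi u(k-2c)},
\end{equation*}
using $(-1)^{c^2}=(-1)^c$. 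Summing the $\binom{k}{c}$ contributions of type $c$ and absorbing the $1/k!$ prefactor of $I_k(u)$, the identity $\binom{k}{c}c!(k-c)!/k!=1$ collapses the combinatorics and produces exactly $a(c,k)\,u^{-\nu(c,k)}e^{i\pi u(k-2c)}$ with $a(c,k)=(-1)^c(2\pi i)^{-\nu(c,k)}G(c+1)^2G(k-c+1)^2$.

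The main obstacle is the rigorous justification. The oscillatory Selberg integrals converge only conditionally, so one must regularize (say by inserting $e^{-\eta\sum y_j}$ and letting $\eta\to 0^+$) and then rotate the contour; one must also verify that the subleading corrections---from Taylor-expanding $(1-x_i-x_j)^2$ beyond its constant $1$, from truncating the extended $[0,\infty)$ integrations back to $[0,1]$, and from lower-dimensional boundary strata of the cube---are each at order $u^{-\nu(c,k)-1}$. A more robust alternative is to invoke Andreief's identity to write $I_k(u)$ as the $k\times k$ Hankel determinant with entries $\int_{-1/2}^{1/2}t^{i+j-2}e^{-2\pi iut}\,dt$, develop each entry into its full endpoint asymptotic expansion in $1/u$ times $e^{\pm i\pi u}$ by iterated integration by parts at $t=\pm\tfrac12$, and expand the determinant multilinearly; grouping the $2^k$ resulting terms by the number $c$ of rows contributing the $e^{+i\pi u}$ asymptote recovers the sum of the lemma with explicit control of the $O(u^{-\nu(c,k)-1})$ remainder.
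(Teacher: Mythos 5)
The paper contains no proof of this lemma: it is stated with the remark that it is ``essentially proved in Section~4 of~\cite{DHI}'', where the large-frequency asymptotics of exactly this multiple integral (equivalently, of the Hankel determinant of the moments $\int t^{i+j-2}e^{-2\pi iut}\,dt$) are derived. Your primary argument --- localizing at the $2^k$ corners of the cube, reducing a corner of type $c$ to a product of two Laguerre-type Selberg integrals via $\int_{[0,\infty)^m}\prod_{i<j}(y_i-y_j)^2e^{-\lambda\sum y_j}\,dy=m!\,G(m+1)^2\lambda^{-m^2}$, and collapsing the combinatorics with $\binom{k}{c}c!(k-c)!/k!=1$ and $(-1)^{c^2}=(-1)^c$ --- is in substance the method of that reference, and your leading coefficient is correct: as a check against Lemma~\ref{lem not diff}, for $k=2$ your $c=1$ term gives $1/(4\pi^2u^2)$ and the $c=0,2$ terms give $\cos(2\pi u)/(8\pi^4u^4)$, which matches $\tfrac{1}{(2\pi i)^2}\bigl(-u^{-2}+\sin^2(\pi u)/(\pi^2u^4)\bigr)$ up to admissible $O(u^{-\nu-1})$ corrections.

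The substantive gap is the one you flag yourself, and it is not a minor one: the oscillatory Selberg integrals over $[0,\infty)^m$ do not converge, and the extension from $[0,1]^m$ to $[0,\infty)^m$ together with the discarded cross-factors $(1-x_i-x_j)^2$ must be shown to perturb things only at order $u^{-\nu(c,k)-1}$. A clean way to close this (which also dissolves the worry about lower-dimensional boundary strata) is to note that the amplitude is a polynomial, so iterated integration by parts terminates after finitely many steps and $I_k(u)$ is \emph{exactly} of the form $\sum_{c=0}^k e^{i\pi u(k-2c)}Q_c(1/u)$ with each $Q_c$ a polynomial in $1/u$; the lemma then reduces to identifying the valuation and lowest-order coefficient of $Q_c$. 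In your ``robust alternative'' via the Hankel determinant, that identification is where the real work sits: each entry is only $O(1/u)$, so the naive bound on the determinant is $u^{-k}$, and one must exhibit the cancellation (the leading endpoint coefficients $(\pm 1/2)^{i+j-2}$ give rank-one blocks) that pushes the type-$c$ contribution down to $u^{-c^2-(k-c)^2}$ and produces the two Barnes-$G$ factors from the resulting determinants of derivative data. That bookkeeping is the technical heart of the cited Section~4 of~\cite{DHI} and is not carried out in your sketch; with it supplied, your proposal becomes a complete proof.
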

Note that $I_k$ above is essentially the inner multidimensional integral in the expression~\eqref{eq:fourier transform 2} for $\gamma_k$.

\begin{lem}\label{lem not diff}
    We have 
\begin{align}
\gamma_2(c) & =\frac{1}{(2 \pi i)^2}\int_{-\infty}^{\infty} e^{2\pi i u (c-1)} \left( -\frac{1}{u^2}+\frac{\sin(\pi u)^2}{\pi^2 u^4} \right) du \\ \\
& = \left\{
    \begin{array}{ccc} & \dfrac{c^3}{3!}\; ,  & \textrm{ if }\ 0\le c\le 1,\\ \\ & \dfrac{(2-c)^3}{3!}\; ,  & \textrm{ if } \ 1\le c\le 2.
    \end{array}\right.    
\end{align}
    In particular, $\gamma_2(c)$ is not differentiable at $c=1$.
\end{lem}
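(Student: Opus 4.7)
The plan is to verify the two displayed equalities separately and then read off non-differentiability from one-sided derivatives at $c=1$.

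First I would obtain the Fourier integral representation by specializing Theorem~\ref{thm:two identities}, equation~\eqref{eq:gamma as ft of det b}, to $k=2$. Since $G(3)=1$ and $(2\pi i)^{k(k-1)}=(2\pi i)^2$, the $2\times 2$ Hankel determinant reduces to $h(u)h''(u)-h'(u)^2$, giving
\begin{equation*}
    \gamma_2(c) = \frac{1}{(2\pi i)^2} \int_{-\infty}^{\infty} e^{2\pi i u (c-1)} \bigl( h(u) h''(u) - h'(u)^2 \bigr) \, du.
\end{equation*}
It then suffices to show $h h'' - (h')^2 = -1/u^2 + \sin^2(\pi u)/(\pi^2 u^4)$. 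The cleanest route is to differentiate the identity $\pi u\, h(u) = \sin(\pi u)$ once and twice to obtain the recursions $u h' + h = \cos(\pi u)$ and $u h'' + 2 h' = -\pi^2 u h$. Substituting the second into $h h''$ gives $h h'' = -\pi^2 h^2 - 2 h h'/u$; expanding everything in terms of $\sin(\pi u)$ and $\cos(\pi u)$, the cross terms of the form $\sin(\pi u)\cos(\pi u)/u^3$ cancel against those coming from $(h')^2$, while $\sin^2+\cos^2=1$ collapses the $1/u^2$ part to the claimed form.

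For the second equality I would simply evaluate $\gamma_2(c)$ directly from its defining integral~\eqref{eq:def of gamma}. Integrating out the delta function leaves
\begin{equation*}
    \gamma_2(c) = \frac{1}{2}\int_{\max(0,c-1)}^{\min(1,c)} (2t - c)^2 \, dt,
\end{equation*}
which evaluates to $c^3/6$ on $[0,1]$ and to $(2-c)^3/6$ on $[1,2]$, matching the formula already displayed after Theorem~\ref{integral analytic}. Finally, non-differentiability at $c=1$ follows from the one-sided derivatives: $\gamma_2'(1^-) = c^2/2\big|_{c=1} = 1/2$ whereas $\gamma_2'(1^+) = -(2-c)^2/2\big|_{c=1} = -1/2$.

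The main obstacle is the trigonometric simplification reducing $h h'' - (h')^2$ to $-1/u^2 + \sin^2(\pi u)/(\pi^2 u^4)$. There is no conceptual difficulty, but it is the one step where a sign slip or missed cancellation could derail the argument, so I would carry it out carefully, grouping terms by the power of $u$ appearing in the denominator ($u^2$, $u^3$, $u^4$) and verifying that the $1/u^3$ terms cancel identically before invoking $\sin^2+\cos^2=1$.
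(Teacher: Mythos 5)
Your proof is correct and is essentially the argument the paper leaves implicit: the lemma is stated without proof, the piecewise cubic is exactly the worked example of $\gamma_2$ given after Theorem~\ref{integral}, and the Fourier form is the $k=2$ specialization of~\eqref{eq:gamma as ft of det b} with $G(3)=1$ and the identity $h h''-(h')^2=-1/u^2+\sin^2(\pi u)/(\pi^2 u^4)$, which checks out (the $\sin(\pi u)\cos(\pi u)/u^3$ cross terms do cancel and $\sin^2+\cos^2=1$ collapses the $1/u^2$ part). The mismatched one-sided derivatives $\pm 1/2$ at $c=1$ then give the non-differentiability claim.
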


\begin{proof}[Proof of Theorem~\ref{thm diff gamma}.]
Substituting~\eqref{eq:I} into equation~\eqref{eq:fourier transform 2},
\begin{equation}
    \gamma_k(c)=\frac{1}{G(1+k)^2}\int_{-\infty}^{\infty} e^{2\pi i u (c-\frac{k}{2})} I_k(u) du.
\end{equation}
Moreover, from its multi-integral definition we see that $I_k(u)$ is continuous for all real $u$.
In particular, $I_k(u)$ is bounded near the origin.
Therefore, to prove that $\gamma_k(c)$ is $(\nu(j,k)-2)$-times differentiable at $c=j$, 
it suffices to show that 
\begin{equation} J_{k}(c):=\int_{|u|>1} e^{2\pi i u (c-\frac{k}{2})} I_k(u) du \end{equation}
is $(\nu(j,k)-2)$-times differentiable at $c=j$.

By Lemma~\ref{lem I_k},
\begin{align*}
J_{k}(c) & =\int_{|u|>1}  e^{2\pi i u (c-\frac{k}{2})} \cdot  \sum_{\ell=0}^{k} e^{i\pi u (k-2\ell)} \left(\frac{a(\ell,k)}{u^{\nu(\ell,k)}}+O\left(\frac{1}{u^{\nu(\ell,k)+1}}\right)\right) du\\
&=\sum_{\ell=0}^{k} \int_{|u|>1}  e^{2\pi i u (c-\ell)} \cdot  \left(\frac{a(\ell,k)}{u^{\nu(\ell,k)}}+O\left(\frac{1}{u^{\nu(\ell,k)+1}}\right)\right) du.
\end{align*}
We show that for each $\ell$,
\begin{equation} J_{\ell,k}(c):=\int_{|u|>1}  e^{2\pi i u (c-\ell)} \cdot  \left(\frac{a(\ell,k)}{u^{\nu(\ell,k)}}+O\left(\frac{1}{u^{\nu(\ell,k)+1}}\right)\right) du \end{equation}
is $(\nu(j,k)-2)$-times differentiable at $c=j$.

\textit{Case 1}: $\ell=j$. In this case, we observe that,
for $n=1, 2, \dots, \nu(j,k)-2\;$, the integrals
\begin{align*}
&\int_{|u|>1} \frac{\partial^n}{\partial c^n} \left[ e^{2\pi i u (c-j)} \cdot  \left(\frac{a(j,k)}{u^{\nu(j,k)}}+O\left(\frac{1}{u^{\nu(j,k)+1}}\right)\right)\right]du\\
&=\int_{|u|>1} e^{2\pi i u (c-j)} \cdot (2\pi i u)^n \left(\frac{a(j,k)}{u^{\nu(j,k)}}+O\left(\frac{1}{u^{\nu(j,k)+1}}\right)\right)du\\
&\ll \int_{|u|>1} u^n \left(\frac{a(j,k)}{u^{\nu(j,k)}}+O\left(\frac{1}{u^{\nu(j,k)+1}}\right)\right)du
\end{align*}
are uniformly convergent in $c$. Therefore, $J_{j,k}$ is $(\nu(j,k)-2)$-times differentiable at $c=j$ and, in addition,
\begin{align}\label{eq n-th deriv}
\frac{d^n}{dc^n}J_{j,k}(c)=\int_{|u|>1} e^{2\pi i u (c-j)} \cdot (2\pi i u)^n \left(\frac{a(j,k)}{u^{\nu(j,k)}}+O\left(\frac{1}{u^{\nu(j,k)+1}}\right)\right)du
\end{align}
for $n=1, 2, \dots, \nu(j,k)-2\,$.

\textit{Case 2}: $\ell\ne j$. In this case, we show that $J_{\ell,k}(c)$ is in fact $C^{\infty}$ at $c=j.$ To prove this, it suffices to show that
\begin{equation} \int_{|c|>1}  e^{2\pi i u \delta} \,\frac{du}{u}\end{equation} is $C^{\infty}$ at $\delta\ne 0$.

Using integration by parts repeatedly we see that
\begin{equation} \int_{|c|>1}  e^{2\pi i u \delta} \,\frac{du}{u}=\frac{m!}{(2\pi i\delta)^m}\int_{|c|>1} e^{2\pi i u \delta} \,\frac{du}{u^{m+1}}+O_m(\delta^{-1}+\delta^{-m}) \end{equation}
for any $m\in\mathbb N$ and real $\delta\ne 0$, where the Big-$O$ term is a $C^{\infty}$ function for $\delta\ne 0$. Also, by uniform convergence (see a similar argument in Case 1)
\begin{equation} \frac{m!}{(2\pi i\delta)^m}\int_{|c|>1} e^{2\pi i u \delta} \,\frac{du}{u^{m+1}} \end{equation}
is $(m-1)$-times differentiable at $\delta\ne 0$. It follows that
\begin{equation} \int_{|c|>1}  e^{2\pi i u \delta} \,\frac{du}{u} \end{equation} is $(m-1)$-times differentiable at $\delta\ne 0$. Since $m$ is arbitrary, we have
\begin{equation} \int_{|c|>1}  e^{2\pi i u \delta} \,\frac{du}{u} \end{equation} is $C^{\infty}$ at $\delta\ne 0$.

Combining Case 1 and Case 2 we obtain that
\begin{equation} J_{k}(c):=\int_{|u|>1} e^{2\pi i u (c-\frac{k}{2})} I_k(u) du \end{equation}
is $(\nu(j,k)-2)$-times differentiable at $c=j$, and therefore, so is $\gamma_k(c).$

\medskip

Lastly, we show that 
\begin{equation} \left(\dfrac{d}{dc}\right)^{\nu(j,k)-2}\gamma_k(c) \end{equation}
is not differentiable at $c=j$.
It suffices to show that
\begin{equation} \left(\dfrac{d}{dc}\right)^{\nu(j,k)-2}J_{j,k} \end{equation}
is not differentiable at $c=j$. By equation~\eqref{eq n-th deriv} we have
\begin{align*} \left(\dfrac{d}{dc}\right)^{\nu(j,k)-2}J_{j,k} &  = \int_{|u|>1} e^{2\pi i u (c-j)} \cdot (2\pi i u)^{\nu(j,k)-2}\\ & \qquad \qquad \left(\frac{a(j,k)}{u^{\nu(j,k)}}+O\left(\frac{1}{u^{\nu(j,k)+1}}\right)\right)du.\end{align*}
Again, by the uniform convergence argument we see that
\begin{align*} \int_{|u|>1} e^{2\pi i u (c-j)} \cdot (2\pi i u)^{\nu(j,k)-2} \cdot O\left(\frac{1}{u^{\nu(j,k)+1}}\right)du\end{align*}
is differentiable at $c=j$. 
Therefore, it remains to show that 
\begin{align*} \int_{|u|>1} e^{2\pi i u (c-j)} \cdot (2\pi i u)^{\nu(j,k)-2} \cdot \frac{a(j,k)}{u^{\nu(j,k)}}du\end{align*}
is not differentiable at $c=j$, or equivalently,
\begin{align*} \int_{|u|>1} e^{2\pi i u (c-1)} \cdot \frac{du}{u^2}\end{align*}
is not differentiable at $c=1$. 

It follows from Lemma~\ref{lem not diff} that
\begin{align*}
\int_{|u|>1} e^{2\pi i u (c-1)} \left( -\frac{1}{u^2}+\frac{\sin(\pi u)^2}{\pi^2 u^4} \right) du
\end{align*}
is not differentiable at $c=1$. 
Since
\begin{align*}
\int_{|u|>1} e^{2\pi i u (c-1)} \cdot \frac{\sin(\pi u)^2}{\pi^2 u^4} du
\end{align*}
is differentiable at $c=1$, we see that 
\begin{align*} \int_{|u|>1} e^{2\pi i u (c-1)} \cdot \frac{du}{u^2}\end{align*}
is not differentiable at $c=1$. 
This ends our proof of Theorem~\ref{thm diff gamma}.

\end{proof}

The highly smooth nature of $\gamma_k(c)$ was first observed empirically by Conrey in the related problem of
determining the asymptotics of the second moment of Dirichlet polynomials whose coefficients are
$k$-th divisor numbers. Specifically, he defines
\begin{eqnarray*}
M_k(c)=\lim_{T\to \infty}\frac{(k^2)!}{a_k T (\log T)^{k^2}}
\int_0^T \left|\sum_{n=1}^N \frac{d_k(n)}{n^{1/2+it}}\right|^2 ~dt
\end{eqnarray*}
for integer values of $k$ and $N=T^c$ with $c>0$, and determined $M_k(c)$ for $k\leq 4$ (conjecturally for $k=3,4$).
By comparing Conrey's tables (personal communication) for $M_k(c)$ with our tables for
$\gamma_k(c)$, it appears to be the case that the derivative of $M_k(c)$ is equal to $(k^2)! \gamma_k(c)$.
Bettin~\cite{B} has proven the analogous smoothness for the polynomials $M_k(c)$.

%


\section{Expansion for $\log D_k(t)$ and the limiting behaviour of $\gamma_k(c)$}

Notice that 
\begin{equation}
    g^{(n)}(0) = \int_0^1 (-x)^n dx = (-1)^n/(n+1).
\end{equation}
Thus, pulling out powers of $-1$ from the determinant, of which there are an
even number, we have
$D_k(0) = \det_{k\times k} (1/(i+j-1))$, which is a special case of
the Cauchy determinant and thus
\begin{equation}
 D_k(0) = G(k+1)^4/G(2k+1).
\end{equation}

Now, $D_k(t)$ satisfies the Toda equation~\cite{S}:
\begin{equation}
    \frac{D_{k-1}(t) D_{k+1}(t)}{D_k(t)^2} = \frac{D^{''}_k(t)}{D_k(t)}-\frac{(D_k'(t))^2}{D_k(t)^2 }
    =(\log(D_k(t)))''
    \label{eq:lewis carroll}
\end{equation}
This follows from a recursion of Dodgson (aka Lewis Carroll) for computing determinants~\cite{D}.
Define $c_m(k)$ by:
\begin{equation}
    \label{eq:log D_k series}
    D_k(t) = D_k(0) \exp\left(\sum_1^\infty \frac{c_m(k)}{m} t^m\right).
\end{equation}
Take the log derivative of the lhs and rhs of the above identity, substitute
the series for $\log(D_k(t))$, and clear the denominator of the rhs.
Comparing coefficients gives the recursion, for $M>2$:
\begin{multline}
    c_M(k) = \frac{1}{(M-1)(M-2)} \sum_{m=0}^{M-3} (m+1) c_{m+2}(k) \\
    \times ( c_{M-m-2}(k-1) + c_{M-m-2}(k+1) - 2 c_{M-m-2}(k) )
\end{multline}
This recursion determines the coefficients $c_M(k)$ in terms of $c_1(k), \ldots, c_{M-2}(k)$.

To get $c_1(k)$:
\begin{equation}
    c_1(k) = D'_k(0)/D_k(0).
\end{equation}
One can differentiate $D_k(t)$ by using the product rule to get a sum of
determinants where we differentiate the $i$-th row. However, because the entries
of $D_k(t)$ are derivatives, differentiating the $i$-th row produces a row that
matches the one below it, and the determinant vanishes. Thus, only the last of
these terms, where we differentiate the last row, survives. However, that
determinant is also a Cauchy determinant with $i,j$ entry $(-1)^{i+j-1}/(i+j-1)$ as
before, except for the last row where the entry is $(-1)^{i+j}/(i+j)$.

Using the formula for the Cauchy determinant, a lot of cancellation occurs and
we get
\begin{equation}
    c_1(k) = -k/2.
\end{equation}

To determine $c_2(k)$, substitute $t=0$ into identity~\eqref{eq:lewis carroll}.
On the lhs:
\begin{multline}
    D_{k-1}(0) D_{k+1}(0) /D_k(0)^2\\
    = G(k)^4 G(k+2)^4 G(2k+1)^2 / (G(2k-1) G(2k+3) G(k+1)^8) \\
    = k^2/(4 (4k^2-1)).
\end{multline}
On the rhs, the constant term of $(\log(D_k(t)))''$ is $c_2(k)$, so
\begin{equation}
    c_2(k) = k^2/(4 (4k^2-1)).
\end{equation}

The recursion, along with the initial two terms determine all the $c_m(k)$'s.
For example, $c_3(k)=0$, and
\begin{equation}
    c_4(k) = \frac {{k}^{2}}{ 16\left( 4k^2-1 \right) ^{2} \left( 4k^2-9 \right)  }.
\end{equation}

We can apply the above to determine the asymptotic expansion of $\gamma_k(c)$ in a large
neighbourhood of $k/2$. To do so, isolate the $m=1,2$ terms from the series~\eqref{eq:log D_k series},
substitute into~\eqref{eq:gamma as ft D_k} with $t=2\pi i u$, and compose the series for $\exp$ with
that of the terms $m\geq 3$ of ~\eqref{eq:log D_k series}, to get that the integrand of~\eqref{eq:gamma as ft D_k}
equals:
\begin{equation}
    \label{eq:integrand isolate quadratic exponential}
    \exp\left(-\frac{(k\pi u)^2}{2(4 k^2 -1)} + 2\pi i (c-k/2) u \right)
    \left(
        1 +
        \frac {k^2 (\pi u)^4}
        { 4\left( 4k^2-1 \right) ^{2} \left( 4k^2-9 \right)  }
        + \ldots
    \right).
\end{equation}
One can obtain more terms, if desired, from the recursion for $c_M(k)$. 
We thus have the following asymptotic expansion:
\begin{thm}

Let $b_k = 8(1-1/(4k^2))$ and $c=k/2+o(k)$.
Then
\begin{multline}
    \gamma_k(c) \sim \frac{G(k+1)^2}{G(2k+1)}
    \sqrt{ \frac{b_k}{\pi} } \exp( - b_k (c-k/2)^2 )\\
    \times \Bigg(
    1+\frac{1}{4k^2-9}
        \bigg( \frac{64(c-k/2)^4-24(c-k/2)^2+3/4}{k^2}\\
        -2\frac{(c-k/2)^2(16(c-k/2)^2-3)}{k^4}
        +4\frac{(c-k/2)^4}{k^6} \bigg)
        +\ldots
        \Bigg).
\end{multline}
i.e. Gaussian near the centre.
\end{thm}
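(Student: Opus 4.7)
The plan is to start from the Fourier representation \eqref{eq:gamma as ft D_k}, substitute the exponential series \eqref{eq:log D_k series} for $D_k(t)$ at $t=2\pi i u$, and isolate the quadratic part of the exponent as a Gaussian in $u$ with the higher-order terms treated as perturbations, exactly as indicated by the integrand \eqref{eq:integrand isolate quadratic exponential}. More precisely, I would split
\begin{equation*}
    \sum_{m=1}^{\infty} \frac{c_m(k)}{m}(2\pi i u)^m = \frac{c_1(k)}{1}(2\pi i u) + \frac{c_2(k)}{2}(2\pi i u)^2 + R_k(u),
\end{equation*}
where $R_k(u)$ collects the $m\ge 3$ terms, and combine the $m=1$ contribution with $\exp(2\pi i c u)$, using the evaluations $c_1(k)=-k/2$ and $c_2(k)=k^2/(4(4k^2-1))$ already computed in the paper.

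The first computation is to evaluate the leading Gaussian integral. With $A = k^2\pi^2/(2(4k^2-1))$ and $B = 2\pi(c-k/2)$,
\begin{equation*}
    \int_{-\infty}^{\infty} \exp(-A u^2 + iB u)\, du = \sqrt{\pi/A}\,\exp(-B^2/(4A)),
\end{equation*}
and a direct check using $b_k = 2(4k^2-1)/k^2 = 8(1-1/(4k^2))$ gives $\sqrt{\pi/A} = \sqrt{b_k/\pi}$ and $B^2/(4A) = b_k(c-k/2)^2$. Multiplying by $D_k(0)/G(k+1)^2 = G(k+1)^2/G(2k+1)$, which follows from the Cauchy-determinant evaluation of $D_k(0)$ established earlier in the section, yields the stated prefactor. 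For the next correction, Taylor-expand $\exp(R_k(u))$ and use $c_3(k)=0$ together with the explicit value of $c_4(k)$. The resulting integrals are Gaussian moments in $u$, which produce Hermite-polynomial-type expressions in $(c-k/2)$; organising these in powers of $1/k$ reproduces the displayed bracketed correction of order $1/(4k^2-9)$. Further terms of the $\log D_k$ series generate subsequent corrections through the same mechanism, and the recursion for $c_M(k)$ makes them computable to any desired order.

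The main obstacle is justifying that the formal substitution of the series \eqref{eq:log D_k series} inside the Fourier integral actually produces a valid asymptotic expansion uniformly in the regime $c-k/2 = o(k)$. I would proceed by splitting the $u$-integral into $|u|\le \eps_k$ and $|u|\ge \eps_k$ for a suitable cut-off $\eps_k$ chosen so that $\eps_k\to 0$ slowly and the $\log D_k$ series converges on $|2\pi i u| \le 2\pi\eps_k$. On the inner piece the Laplace/saddle-point structure applies directly, since the Gaussian variance is $O(1)$ in $b_k$ and the stationary point $u_* = iB/(2A)$ lies at distance $o(1/k)$ from the origin. For the outer piece one uses that the integrand equals $I_k(u)$ up to an exponential phase, so Lemma~\ref{lem I_k} supplies decay like $u^{-k^2}$ at infinity, making the tail exponentially small compared with the Gaussian contribution from the inner piece.

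The expected hard part is this uniformity step: nailing down a quantitative radius of convergence for the $\log D_k(t)$ series in terms of $k$ so that the cut-off $\eps_k$ can be chosen to give genuine control on both the truncation error of $R_k$ and the neglected tail. Once uniform convergence of the tail-cut integrals in $c$ is established, the differentiation-under-the-integral argument used in the proof of Theorem~\ref{thm diff gamma} legitimises interchanging the Taylor expansion with the integration, and the displayed asymptotic follows.
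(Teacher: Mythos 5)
Your proposal follows essentially the same route as the paper: substitute the series \eqref{eq:log D_k series} into \eqref{eq:gamma as ft D_k} at $t=2\pi i u$, isolate the $m=1,2$ terms to produce the Gaussian factor (whose integral gives $\sqrt{b_k/\pi}\,\exp(-b_k(c-k/2)^2)$ with prefactor $D_k(0)/G(k+1)^2=G(k+1)^2/G(2k+1)$), and expand the remaining exponential using $c_3(k)=0$ and $c_4(k)$ to obtain the correction terms as Gaussian moments, exactly as in \eqref{eq:integrand isolate quadratic exponential}. Your computations check out, and your added discussion of the cut-off and tail estimates goes somewhat beyond the paper's (formal) derivation.
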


\section{Elliptic aliquot cycles}

The basic method used to pass from~\eqref{eq:def of gamma} to equation~\eqref{eq:fourier transform 1}
can be used in the context of elliptic aliquot cycles.

Let $\bs p=(p_1,\dots,p_d)$ be a $d$-tuple of distinct primes.
Let $\alpha(\bs p)$ be the
probability of choosing random
and independently $d$
elliptic curves $E_1,\dots,E_d$ over $\F_{p_1},\dots,\F_{p_d}$, respectively,
with the property that $|E(\F_{p_j})|=p_{j+1}$, for $j\in\{1,\dots,d\}$.
Here, $p_{d+1}=p_1$. We are choosing the curves $E_j$ uniformly from the
set of isomorphism classes of elliptic curves over $\F_p$.

David, Koukoulopoulos, and Smith~\cite{DKS}  gave an asymptotic for the average of $\alpha(\bs p)$
over the set
\begin{equation}
    \CP_d(x) =
    \{(p_1,\dots,p_d): p_1\le x\}.
\end{equation}
(Hasse's bound implies that $\alpha(\bs p)=0$ unless $|p_{j+1}-p_j-1|<2\sqrt{p_j}$ for $1\leq j \leq d$).


\begin{thm}[DKS]\label{aliquot}
For any fixed $A>0$,
{\small
\[
\sum_{\bs p \in \CP_d(x)} \alpha(\bs p)
    = C_{\text{aliquot}}^{(d)} \int_2^x \frac{du}{2\sqrt{u}(\log u)^d}
        + O_A\left( \frac{\sqrt{x}}{(\log x)^A} \right)
     \sim C_{\text{aliquot}}^{(d)} \frac{\sqrt{x}}{(\log x)^d} ,
\]
where{\footnotesize
\[
C_{\text{aliquot}}^{(d)} := I_{\text{aliquot}}^{(d)} \cdot
    \prod_{\ell}\frac{\ell^d\cdot
        \# \left\{ \bs\sigma\in \GL_2(\Z/\ell\Z)^d :
        \begin{array}{l}
        \det(\sigma_j) +1 -\tr(\sigma_j) \equiv  \det(\sigma_{j+1}) (\ell)\\
        \mbox{for $1\le j\le d$, where $\sigma_{d+1}=\sigma_1$}
        \end{array} \right\}}{\left| \GL_2(\Z/\ell \Z) \right|^d}
\]
}
with
\[
I_{\text{aliquot}}^{(d)} := \frac{2^d}{\pi^d}
    \idotsint\limits_{\substack{|t_j|\le 1\ (1\le j\le d-1) \\ |t_1+\cdots+t_{d-1}|\le 1}}
        \sqrt{1-(t_1+\cdots+t_{d-1})^2} \prod_{j=1}^{d-1}\sqrt{1-t_j^2} \ d t_1\cdots d t_{d-1} .
\]
}
\end{thm}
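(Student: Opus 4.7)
The plan is to reduce the count $\sum_{\bs{p}\in\CP_d(x)}\alpha(\bs{p})$ to an Euler product of local densities times an archimedean integral, following the strategy of David--Koukoulopoulos--Smith. Setting $a_j := p_j + 1 - p_{j+1}$, so that $a_j$ is the trace of Frobenius of $E_j$ at $p_j$, Hasse's bound enforces $|a_j| < 2\sqrt{p_j}$, and summing telescopically using $p_{d+1}=p_1$ yields $a_1+\cdots+a_d = d$. By Deuring's theorem the number of isomorphism classes of elliptic curves over $\F_{p_j}$ with trace $a_j$ equals, up to the standard automorphism weighting, the Hurwitz--Kronecker class number $H(4p_j - a_j^2)$, so $\alpha(\bs{p})$ is essentially $\prod_{j=1}^d H(4p_j-a_j^2)$ divided by the total isomorphism counts, which are of size $\asymp \prod_j p_j$.

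I would next insert the class number formula $H(4p_j-a_j^2)\sim \sqrt{4p_j-a_j^2}\,L(1,\chi_{-D_j})/\pi$, with $-D_j$ the fundamental discriminant associated to $4p_j-a_j^2$. Expanding each $L(1,\chi_{-D_j})$ as an Euler product and interchanging summation, the local factor at a prime $\ell$ counts tuples $\bs{\sigma}\in\GL_2(\Z/\ell\Z)^d$ of Frobenius conjugacy classes satisfying the Chebotarev condition $\det\sigma_j+1-\tr\sigma_j\equiv\det\sigma_{j+1}\pmod{\ell}$, which matches exactly the Euler factor displayed inside $C_{\text{aliquot}}^{(d)}$. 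For the archimedean contribution, rescale $t_j := a_j/(2\sqrt{p_j})$ so that $\sqrt{4p_j-a_j^2}/(2\sqrt{p_j}) = \sqrt{1-t_j^2}$ supplies a Sato--Tate density from each factor. The constraint $a_1+\cdots+a_d=d$ becomes $t_1+\cdots+t_d=0$ to leading order; using this to eliminate $t_d$ converts the region $|t_j|\leq 1$ for $1\leq j\leq d$ into the domain displayed in $I_{\text{aliquot}}^{(d)}$, with integrand $\sqrt{1-(t_1+\cdots+t_{d-1})^2}\prod_{j=1}^{d-1}\sqrt{1-t_j^2}$. Replacing $\sum_{p_1\leq x}$ by $\int_2^x du/\log u$ and each ``$p_{j+1}$ is prime'' indicator by its density $1/\log p_1$ then produces the stated asymptotic.

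The hard part will be justifying these averages rigorously with the power-of-log saving $O_A(\sqrt{x}/(\log x)^A)$ in the error term. Since each $p_{j+1}$ is constrained to a short interval of length $O(\sqrt{p_j})$ about a specified value, a short-interval analogue of Bombieri--Vinogradov, or an equivalent input on primes in short arithmetic progressions, is required to average simultaneously over $p_1$ and $(a_1,\ldots,a_{d-1})$. Controlling possible Siegel zeros among the real characters $\chi_{-D_j}$ is the most delicate analytic point and would be handled by a Landau--Page argument combined with a contour shift in Perron's formula, as in David--Koukoulopoulos--Smith. The Fourier/Andreief methodology of Section~2 of the present paper does not simplify this analytic reduction, but it does give a clean finite-dimensional representation of $I_{\text{aliquot}}^{(d)}$ itself, which is the direction Section~4 develops further.
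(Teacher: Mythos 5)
There is an important mismatch of expectations here: the paper does not prove Theorem~\ref{aliquot} at all. It is quoted verbatim, with attribution, from David--Koukoulopoulos--Smith~\cite{DKS}; the present paper only takes the archimedean factor $I_{\text{aliquot}}^{(d)}$ as input and studies the integral $I(d)$ via the Fourier/Bessel representation. So there is no internal proof to compare your argument against, and you were right at the end of your proposal to observe that the Andreief/Fourier machinery of Section~2 plays no role in establishing the theorem itself.

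Judged on its own as a proof of the DKS theorem, your write-up is a sound roadmap but not a proof. The structural reductions are correct: $a_j=p_j+1-p_{j+1}$ telescopes around the cycle to $a_1+\cdots+a_d=d$; Deuring's theorem converts $\alpha(\bs p)$ into a product of Hurwitz class numbers $H(4p_j-a_j^2)$ normalized by the total mass $\asymp p_j$; the class number formula introduces $L(1,\chi_{-D_j})$, whose Euler factors reassemble into the $\GL_2(\Z/\ell\Z)^d$ local densities; and the rescaling $t_j=a_j/(2\sqrt{p_j})$ with the constraint $\sum t_j\to 0$ produces exactly the domain and integrand of $I_{\text{aliquot}}^{(d)}$. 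But everything that makes the theorem a theorem rather than a heuristic is deferred: the interchange of the sum over $\bs p$ with the Euler products, the averaging of the special values $L(1,\chi_{-D_j})$ over the family (including the truncation of the Euler products and the treatment of possible Siegel zeros), the fact that each $p_{j+1}$ ranges over a short interval of length $O(\sqrt{p_j})$ so that a Bombieri--Vinogradov-type input is needed, and the derivation of the power-of-log error term $O_A(\sqrt{x}/(\log x)^A)$ are all named but not executed. These occupy the bulk of~\cite{DKS}, so the proposal cannot stand as a proof; it is an accurate summary of the strategy of the cited work.
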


Let
\begin{equation}
    \label{eq:I(d)}
    I(d):=
    \idotsint\limits_{\substack{|t_j|\le 1\ (1\le j\le d-1) \\ |t_1+\cdots+t_{d-1}|\le 1}}
    \sqrt{1-(t_1+\cdots+t_{d-1})^2} \prod_{j=1}^{d-1}\sqrt{1-t_j^2} \ d t_1\cdots d t_{d-1}.
\end{equation}
$I(1)=1$, $I(2)=4/3$. One might wonder if $I(d)$ persists in being rational. We will
show, for $d=3$, that this seems unlikely.

Replacing the Dirac delta function by the integral in~\eqref{eq:delta}, we have
\begin{equation}
    \label{eq:I(d) 3}
    I(d) =
    \int_{-\infty}^{\infty}
    \int_{[-1,1]^d} \prod_1^d (1-t_j^2)^{1/2}
    \exp\left(2\pi i y \sum t_j\right)
    d t_1\cdots d t_d d y
\end{equation}
But
\begin{equation}
    \int_{-1}^{1} (1-t^2)^{1/2} \exp(2\pi i y t) d t = J_1(2\pi y)/(2y),
\end{equation}
($J$-Bessel function on the rhs). Separating the integral, we get
\begin{equation}
    \label{eq:I(d) 4}
    I(d) =
    \int_{-\infty}^{\infty} \left( \frac{J_1(2\pi y)}{(2y)} \right)^d dy,
\end{equation}
i.e. a one dimensional integral.

This formula can be used to efficiently evaluate $I(d)$ for, say, $d=3,4,\ldots$, for example
with Poisson summation.

Let $f \in L^1({\mathbb{R}})$ and let
\begin{equation}
    \hat{f}(y) = \int_{-\infty}^\infty f(t) e^{-2\pi i y t} dt.
\end{equation}
denote its Fourier transform. 
The Poisson summation formula asserts, for, say, $f$ continuous,
that
\begin{equation}
    \sum_{n=-\infty}^{\infty} f(n) = \sum_{n=-\infty}^{\infty} \hat{f}(n)
\end{equation}
provided the rhs converges absolutely and that $\sum f(n+v)$ converges uniformly
in $v$ on compact sets.

Let $\Delta >0$. By a change of variable
\begin{equation}
    \Delta \sum_{n=-\infty}^{\infty} f(n\Delta) = \sum_{n=-\infty}^{\infty} \hat{f}(n/\Delta)
    = \hat{f}(0) + \sum_{n \neq 0} \hat{f}(n/\Delta),
\end{equation}
so that
\begin{equation}
    \int_{-\infty}^\infty f(t) d t
    -\Delta \sum_{n=-\infty}^{\infty} f(n\Delta)
    =-\sum_{n \neq 0} \hat{f}(n/\Delta)
\end{equation}
tells us how closely the Riemann sum $\Delta \sum_{n=-\infty}^{\infty} f(n\Delta)$
approximates the integral $\int_{-\infty}^\infty f(t)dt$.

Apply, with
\begin{equation}
    f(y)= \left( \frac{J_1(2\pi y)}{(2y)} \right)^d.
\end{equation}

Note that
\begin{equation}
    \int_{-\infty}^{\infty} \frac{J_1(2\pi y)}{(2y)} \exp(-2\pi i u x) d x
    =
    \begin{cases}
        (1-u^2)^{1/2}, \qquad |u| \leq 1, \\
        0, \qquad \text{otherwise}.
    \end{cases}
\end{equation}
Therefore, the Fourier transform of $ \left( \frac{J_1(2\pi y)}{(2y)} \right)^d$, being
the $d$-fold convolution of $(1-u^2)^{1/2}$ with itself, is supported in $|u|\leq d$.

Hence, in the Poisson sum method, any choice of $\Delta \geq 1/d$ gives {\it no} remainder
in the Poisson formula (i.e. 0 contribution from them $|n| \geq 1 $ terms).
Thus, taking $\Delta = 1/d$ gives:
\begin{equation}
    \label{eq:I(d) 5}
    I(d) =
    \int_{-\infty}^{\infty} \left( \frac{J_1(2\pi y)}{(2y)} \right)^d dy
    = \frac{1}{d} \sum_{-\infty}^{\infty} \left( \frac{J_1(2\pi n/d )}{(2n/d)} \right)^d.
\end{equation}
Furthermore, $J_1(z) \sim \sqrt{\frac{2}{\pi z}} \cos(z-3\pi/4)$, hence the sum on the right
has terms that are $\ll (2\pi)^{-d} (n/d)^{-3d/2}$. Thus with $d=3$, the first  million terms of the
sum gives more than twenty digits accuracy.

One can accelerate the convergence of the sum further using the asymptotics of the $J$-Bessel function,
and algorithms for the evaluation of the polylogarithm $\text{Li}_s(z)=\sum_1^\infty z^n/n^s$.
Or one can cheat and just use a blackbox like Maple to evaluate~\eqref{eq:I(d) 4}, with $d=3$:
\begin{multline}
    I(3)=
    1.7053570421915038354985956872898996791331386909\\
    7890590667136169819331192007797559594679011\ldots
\end{multline}
Let $A_n/B_n$ be the $n-th$ convergent of the continued fraction of the real number $\alpha$.
If $p,q \in \Z$ satisfies:
\begin{equation}
    |\alpha-p/q| < |\alpha-A_n/B_n|
\end{equation}
then $q>B_n$. Therefore, computing the continued fraction for $I(3)$,
the 85-th convergent is:
\begin{equation}
    {\frac {14703927951211792459205597491632973549428444428}{
    8622199098152613288048825699460716423721576467}}
\end{equation}
(and $|I(3)-A_{85}/B_{85}| \neq 0$.
With given precision, there is a limit to how many convergents we can meaningfully use).

Thus, if $I(3)$ is rational, then it has denominator at least $10^{45}$. It would not be too
difficult to increase the denominator to hundreds or thousands of digits (millions of digits with some
effort), assuming $I(3)$ is irrational.

Maple's identify command did not turn up any obvious expressions for $I(3)$ in terms
of algebraic numbers and known constants.

One can also determine the behaviour of $I(d)$ for large $d$. Writing
\begin{equation}
    \left( \frac{J_1(2\pi y)}{(2y)} \right)^d
     = \left(\frac{\pi}{2} \right)^d \exp\left( d \log(J_1(2\pi y)/(\pi y))\right),
    \label{eq:J1 as exp log}
\end{equation}
expanding $J_1$ in its Maclaurin series, and pulling out the $y^2$ term, the above becomes
\begin{multline}
    \left(\frac{\pi}{2} \right)^d \exp\left(-\frac{d \pi^2 y^2}{2} \right) \\
    \times \exp\left(-\frac{d \pi^4 y^4}{24} -\frac{d\pi^6 y^6}{144}
    -\frac{d\pi^8 y^8}{720} - \frac{13 d \pi^{10} y^{10}}{43200} + \ldots\right).
    \label{eq:J1 exp log series}
\end{multline}
    Taking the Maclaurin series of the latter exponential (truncated with remainder term),
    we thus get the asymptotic expansion
    \begin{multline}
        \label{eq:I(d) 6}
        I(d) =
        \int_{-\infty}^{\infty} \left( \frac{J_1(2\pi y)}{(2y)} \right)^d dy \\
        \sim \left(\frac{\pi}{2}\right)^{d-1/2} \frac{1}{d^{1/2}}
        \left(
            1 - \frac{1}{8d}-\frac{5}{384d^2}+\frac{7}{3072d^3}+\frac{3829}{491520d^4}+\ldots 
        \right).
    \end{multline}
\newpage

\begin{table}[H]
\small
\begin{tabular}{|c|c|l|}
\hline
$k$ & $ j$ & $ (k^2-1)!\gamma_k(c)$ \\
\hline
$2$ & $ 0$ & $ c^3$ \\
\hline
& $ 1$ & $ (2-c)^3$ \\
\hline 
\hline
$3$ & $ 0$ & $ c^8$ \\
\hline
& $ 1$ & $ -2 c^{8}+24 c^{7}-252 c^{6}+1512 c^{5}-4830 c^{4}$ \\
& & $+8568 c^{3}-8484 c^{2}+4392 c-927$ \\
\hline
& $ 2$ & $ (c-3)^8$ \\
\hline
\hline
$4$ & $ 0$ & $ c^{15}$ \\
\hline
& $ 1$ & $ -3 c^{15}+60 c^{14}-1680 c^{13}+29120 c^{12}-294840 c^{11}+1873872 c^{10}-7927920 c^{9}$\\
&&$+23268960c^{8}-48674340 c^{7}+ 73653580 c^{6}-80912832 c^{5}+63969360 c^{4}$\\
&&$-35497280 c^{3}+13131720 c^{2}-2910240 c+292464$ \\
\hline
& $ 2$ & $ 3 c^{15}-120 c^{14}+3360 c^{13}-58240 c^{12}+644280 c^{11}-4948944 c^{10}+28428400 c^{9}$\\
&&$-128700000 c^{8}+470398500 c^{7}-1381480100 c^{6}+3179336160 c^{5}-5531176560 c^{4}$\\
&&$+6950332480 c^{3}-5910494520 c^{2}+3031004640 c-705916304$\\
\hline
& $ 3$ & $ (4-c)^{15}$\\
\hline
\hline
$5$ & $ 0$ & $ c^{24}$ \\
\hline
& $ 1$ & $ -4 c^{{24}}+120 c^{{23}}-6900 c^{{22}}+253000 c^{{21}}-5578650 c^{{20}}+79695000 c^{19}$\\
&&$-785367660 c^{18}+5598232200 c^{17}- 29915282925 c^{16}+123134189200 c^{15}$\\
&&$-398517412920 c^{14}+1029946456560 c^{13}-2149736416100 c^{12}+3651921075600 c^{11}$\\
&&$-5072249298600 c^{10}+5768661885360 c^{9}-5363308269495 c^{8}+4055447662200 c^{7}$\\
&&$-2470634081300 c^{6}+1194550480200 c^{5}-447845361810 c^{4}+125530048600 c^{3}$\\
&&$-24758793900 c^{2}+3065085000 c-179192775$\\
\hline
& $ 2$ & $ 6 c^{24}-360 c^{23}+20700 c^{22}-759000 c^{21}+17798550 c^{20}-292215000 c^{19}$\\
&&$+3673797820 c^{18}-38235839400 c^{17}+347123925225 c^{16}-2790376974000 c^{15}$\\
&&$+19589544660840 c^{14}-117507788504400 c^{13}+592028782736300 c^{12}$\\
&&$-2479096272534000 c^{11}+8573537591434200 c^{10}-24367026171730000 c^{9}$\\
&&$+56603181050415945 c^{8}-106665764409131400 c^{7}+161304132700472300 c^{6}$\\
&&$-192656070655587000 c^{5}+177464649282553710 c^{4}-121528934511474600 c^{3}$\\
&&$+58223870087874900 c^{2}-17407730744067000 c+2443806916000825$\\
\hline
& $ 3$ & $ -4 c^{24}+360 c^{23}-20700 c^{22}+759000 c^{21}-18861150 c^{20}+345345000 c^{19}$\\
&&$-4991492660 c^{18}+59676982200 c^{17}-604502001675 c^{16}+5220961534800 c^{15}$\\
&&$-38343917872920 c^{14}+238359873297840 c^{13}-1250073382257700 c^{12}$\\
&&$+5522495132708400 c^{11}-20539021982760600 c^{10}+64263112978594640 c^{9}$\\
&&$-168820549421134545 c^{8}+370693368908418600 c^{7}-674525363862958300 c^{6}$\\
&&$+1002229415508043800 c^{5}-1187187920423969310 c^{4}+1078975874367012600 c^{3}$\\
&&$-706068990841773900 c^{2}+295689680026989000 c-59394510856327775$\\
\hline
& $ 4$ & $ (5-c)^{24}$ \\
\hline
\end{tabular}
\caption
{The polynomials $(k^2-1)!\gamma_k(c)$ for $k\leq 5$ and $j \leq c \leq j+1$.}
\label{table:gamma_k}
\end{table}
 
\begin{table}[H]
\tiny
\begin{tabular}{|c|c|l|}
\hline
$k$ & $ j$ & $ (k^2-1)!\gamma_k(c)$ \\
\hline
\hline
$6$ & $ 0$ & $ c^{35}$ \\
\hline
$6$ & $ 1$ & $ -5 c^{35}+210 c^{34}-21420 c^{33}+1413720 c^{32}-56862960 c^{31}+1501747632 c^{30}$\\
&&$-27736558080 c^{29}+375954464160 c^{28}-3881009646360 c^{27}+31410293440680 c^{26}$\\
&&$-203947162827408 c^{25}+1082230579684800 c^{24}-4764220775823600 c^{23}$\\
&&$+17613096754503600 c^{22}-55229306110228800 c^{21}+148080133608311520 c^{20}$\\
&&$-341689133815514100 c^{19}+682008750903872700 c^{18}-1182119446613536200 c^{17}$\\
&&$+1784232273468783600 c^{16}-2349159980084905680 c^{15}+2699953776702032400 c^{14}$\\
&&$-2707997790067516800 c^{13}+2366932574161864800 c^{12}-1798264701411305400 c^{11}$\\
&&$+ 1182907170763213896 c^{10}-670007069282572560 c^{9}+324322366699605120 c^{8}$\\
&&$-132818300667235920 c^{7}+45395326648924560 c^{6}-12709759385961792 c^{5}$\\
&&$+2839179794146080 c^{4}-486611119673910 c^{3}+60083734292610 c^{2}$\\
&&$-4757721939180 c+181451828088$\\
\hline
& $ 2$ & $ 10 c^{35}-840 c^{34}+85680 c^{33}-5654880 c^{32}+238447440 c^{31}-7029581328 c^{30}$\\
&&$+158939827200 c^{29}-3010298623200 c^{28}+51174168784200 c^{27}-802885194480600 c^{26}$\\
&&$+11485501718811120 c^{25}-145954772087342400 c^{24}+ 1615205663712622800 c^{23}$\\
&&$-15414821245929142800 c^{22}+126507768912420350400 c^{21}-893399034384858022560 c^{20}$\\
&&$+5440022414523749814300 c^{19}-28627456041998656712100 c^{18}+130462364245768533732600 c^{17}$\\
&&$-515683796529615245254800 c^{16}+1769595318452023551221040 c^{15}-5272695333575690900655600 c^{14}$\\
&&$+13632520546818627517123200 c^{13}-30536223709478278133815200 c^{12}+59100950810144250579990600 c^{11}$\\
&&$- 98447935269887910573290424 c^{10}+140369638227928515300288240 c^{9}-170046927222112798851396480 c^{8}$\\
&&$+173284197564689124463669680 c^{7}-146552294343347207749027440 c^{6}+100980418141793007531096768 c^{5}$\\
&&$-55222971916535322127277280 c^{4}+23052485974924851589246410 c^{3}-6898544814307888233994110 c^{2}$\\
&&$+1317633501288006725436180 c-120657836168926671721608$\\
\hline
\end{tabular}
\caption
{$(k^2-1)!\gamma_k(c)$ for $k=6$ and $j \leq c \leq j+1$, $j=0,1,2$. The polynomials
for $j=3,4,5$ can be determined from the above using $\gamma_k(c) = \gamma_k(k-c)$.}
\label{table:gamma_k b}
\end{table}

\section{Acknowledgments}

The third author is supported in part by an NSERC Discovery Grant. The first and third authors conducted part of this research at the American Institute of Mathematics. We thank AIM for support, and Brian Conrey and Sandro Bettin or helpful comments.

%
%
%
%
%
%


\end{document}